\newcommand{\rat}[2]{{\textstyle\frac{#1}{#2}}}
\newcommand{\Z}[1]{\ensuremath{e^{#1 t}\star}}
\newcommand{\Za}[1]{\ifcase#1 undef0
  \or undef1
  \or\Z{-\rat{27}{10}}
  \or\Z{-\rat{38}{5}}
  \else undefinf \fi}
\newcommand{\Zb}[1]{\ifcase#1 undef0
  \or\Z{-\rat{2}{\epsilon}}
  \or\Z{-\rat{5}{\epsilon}}
  \or\Z{-\rat{10}{\epsilon}}
  \else undefinf \fi}
\newtheorem{theorem}{Theorem}
\newtheorem{lemma}[theorem]{Lemma}
\newtheorem{corollary}[theorem]{Corollary}
\newtheorem{definition}[theorem]{Definition}
\newtheorem{example}[theorem]{Example}
\theoremstyle{remark}
\newtheorem{remark}{Remark}
\newcommand{\Lip}{\operatorname{Lip}}
\begin{document}
\title{Center manifolds for infinite dimensional random dynamical systems}


\author{Xiaopeng Chen
\thanks{Beijing International Center for Mathematical Research, Peking University, Beijing, 100871, \textsc{China}. \protect\url{mailto:
chenxiao002214336@yahoo.cn} }
\and
Anthony J. Roberts\thanks{School of Mathematical Sciences, University of Adelaide, Adelaide, 5005, \textsc{Australia}.
\protect\url{mailto:anthony.roberts@adelaide.edu.au}}
\and
Jinqiao Duan\thanks{Institute for Pure and Applied Mathematics (IPAM), University of California, Los Angeles,  CA 90095, USA \& Department of Applied Mathematics, Illinois Institute of Technology,   Chicago, IL 60616, USA.
\protect\url{mailto:jduan@ipam.ucla.edu}}
}

\date{\today}

\maketitle

\begin{abstract}
Stochastic center manifolds theory are crucial in modelling  the dynamical behavior of  complex systems under stochastic influences. 
A multiplicative ergodic theorem on Hilbert space is proved to be satisfied to the  exponential trichotomy
condition.
Then the existence of stochastic center manifolds  for infinite dimensional random dynamical systems is shown under the assumption of  exponential trichotomy. The theory provides a support for the discretisations of nonlinear stochastic partial differential equations with space-time  white noise.
\end{abstract}
\paragraph{Mathematics Subject Classifications (2010)} Primary 37L55; Secondary 37D10,
34D35.

\paragraph{Keywords} Multiplicative ergodic theorem, exponential trichotomy, random dynamical systems,
center manifolds, stochastic partial differential equation.
\section{Introduction}
The theory of  centre manifolds plays an important role  in  the deterministic dynamical systems and has been proved a tremendous
applications such as bifurcation~\cite{Har,Mar}. It  has been developed by many people(e.g. Carr~\cite{Carr},   Kelly~\cite{Kel},  Vanderbauwhede~\cite{Van}).
It is  important to study the stochastic  case of center manifolds since  in many applications  the  dynamical  systems are  influenced by  noise. Arnold~\cite{Arn98}  summarised various invariant manifolds  on  finite dimensional random dynamical systems. Mohammed and Scheutzow~\cite{MohScheu99} focused
 on the existence of local  stable and unstable  manifolds for stochastic differential equations driven by semimartingales. Boxler~\cite{Box}  proved the existence of  stochastic
  center manifold  for  finite dimensional random dynamical systems by using the multiplicative ergodic theorem and discrete random map. Roberts~\cite{Rob,Rob1,Rob2}  assumed  existence of  stochastic center manifolds  for infinite stochastic partial differential equations in exploring  the  interactions of microscale noises and their macroscale modelling. The natural problem is  to show the nature of stochastic center  manifolds  on infinite dimensional spaces.

  In the present paper, we study the stochastic center manifolds for infinite dimensional random dynamical systems.  We prove the existence of stochastic  center  manifolds  on infinite dimensional random dynamical systems. We extend the results of  the stochastic center manifold theory on finite dimensional random dynamical systems~\cite{Arn98,Box} to the infinite dimensional random dynamical systems.  However,  Boxer's  proof is not suitable in infinite dimension case since the Ascoli's theorem~(\cite{Box}, Lemma 4.4) cannot directly extend to the general infinite dimensional space. Recently we explain the existence and properties of stochastic center manifolds  for a class of stochastic evolution equations~\cite{Chen} by  the existence of exponential trichotomy. However it is not suitable for stochastic partial differential equations  driven by nonlinear multiplicative niose.

In recent years, there are some results about the invariant manifolds for  random dynamical systems that generated from  stochastic partial differential equations.
 Duan and others~\cite{Car2,Duan1,Duan2}  presented stable and unstable invariant manifolds for a class of stochastic partial differential
 equations driven by one dimensional Brownian motion under the assumption of exponential dichotomy or pseudo exponential dichotomy. Stochastic inertial manifolds which  generalized from center-unstable manifolds on   finite dimensional  spaces  are constructed  by different methods~\cite{Ben, Car1, Prato}. Chen et al.\cite{Chen} proved the existence and its properties of  center manifolds for a class of  stochastic evolution equations with linearly multiplicative noise. It is a natural way to consider the random dynamical systems generated by nonlinear multiplicative
stochastic partial differential equations.  However, the problem  is still open since one cannot apply Kolmogorov¡¯s theorem  to ensure the stochastic flow
property~(\cite{Prato1}, pp. 246-248). For the infinite dimensional random dynamical systems, a classic result is the multiplicative ergodic theorem(\textsc{met}). Ruelle~\cite{Rue2} proved the stable and unstable manifolds  in Hilbert space for differentiable dynamical systems  by the technique of multiplicative ergodic theorem. Mohammed~\cite{Moh} gave  a details of the extension of Ruelle~\cite{Rue2} to prove the stable and unstable manifold of a class of semilinear stochastic evolution  equations and semilinear stochastic partial differential equations by showing the existence of perfect cocycles.  For the  discussion of  stable and unstable manifolds in discrete random dynamical systems we refer to Li and Lu~\cite{Li}, Lian and Lu~\cite{Lia}.

The concept of exponential trichotomy is important for center manifold theory  in infinite dimensional dynamical systems and non-autonomous systems~\cite{Bar, Gal, Pli}.
The existence of exponential trichotomy means  that the space is split  into three subspaces: center subspace, unstable subspace and stable subspace.
Center manifold theory is based on the assumption of exponential trichotomy~\cite{Pli}.
We first introduce the exponential trichotomy for random dynamical systems. Then we change the random dynamical systems under the context of equaions which we  consider.    Our basic tool is the multiplicative ergodic theorem. We introduce the  multiplicative ergodic theorem(\textsc{met}) on two side time in an infinite dimensional Hilbert space. Then the condition of exponential trichotomy holds if the Lyapunov exponents satisfies some gap condition. Various versions of  multiplicative ergodic theorem for  random dynamical systems  on finite dimension space  have been summarized by Arnlod~\cite{Arn98}.  A discrete version on multiplicative ergodic theorem  for random dynamical systems  on Hilbert space have recently been proved by Ruelle~\cite{Rue2}. Mohammed et al.~\cite{Moh} gave  a details of the extension to one side continuous time from  a discrete version of multiplicative ergodic theorem~\cite{Rue2}.

Constructing accurate and efficient models of nonlinear stochastic  partial differential equations is an important task. There  have been some theory developments in seeking numerical approximation of nonlinear stochastic partial differential equations, for example, finite difference~\cite{Gy,Yoo}, Galerkin approximation~\cite{Blo,Gy1}. Roberts applies the stochastic center manifold theory to derive more accurately and efficiently discretisations of nonlinear stochastic  partial differential equations~\cite{Rob3,Rob1,Rob2}.   However, the assumption of existence  stochastic center manifolds in infinite dimensional spaces is needed. In the present paper, we give a theory to support spatial discretisations of  nonlinear stochastic  partial differential equations by stochastic  center manifolds. We analysis the nonlinear stochastic Burgers equation driven by space-time white noise as an example. We show that stochastic nonlinear Burgers equation driven by space-time white noise generate a random dynamical systems and there exists a stochastic center manifold.

Section \ref{pre}  show how to adapt the results to two side time discrete and  continuous random dynamical systems. By the multiplicative ergodic theorem we have proved, we split the Hilbert space into three  subspaces: center subspace, unstable subspace and stable subspace. Then we introduce the exponential trichotomy and give a condition of existence of exponential trichotomy in Section \ref{Exp}. Used the Lyapunov--Perron method, Section~\ref{Sec3}  proves the existence of  stochastic center manifolds both for  discrete and continuous random dynamical systems  under the assumption of exponential trichotomy. This result applied to stochastic partial differential equations driven by  by one dimensional Brownian motion and the random dynamical systems generated by nonlinear stochastic Burgers equation driven by space-time white noise  in Section \ref{Sec4}.

\section{Preliminaries}\label{pre}
We recall below the definition of a cocycle  in Hilbert space. Let $(\Omega,\mathcal{F},\mathbb{P})$  be a complete probability space. Suppose
$\theta :\mathbb{R}\times \Omega \to \Omega$ is a group of $\mathbb{P}$-preserving ergodic
 transformations on  $(\Omega,\mathcal{F},\mathbb{P})$.

Let $H$ be a real separable Hilbert space with  norm $|\cdot|$ and
Borel $\sigma$-algebra $\mathcal {B}(H)$.

Let $\mathbb{T}=\mathbb{Z}$ or $\mathbb{R}$. A random dynamical system $(U, \theta)$  on $H$
 is a $\mathcal {B} (\mathbb{R}) \otimes \mathcal{F} \otimes \mathcal {B}(H) $-
measurable random mapping
$U:\mathbb{T}  \times \Omega \times H \to H$
with the following properties:
\begin{enumerate}
\item   $U(t_1+t_2,\omega)=
U(t_2,\theta (t_1,\omega))\circ U(t_1,\omega)$
for all $t_1,t_2 \in \mathbb{T} $, all $\omega \in \Omega$.
\item  $U(0,\omega)x=x$ for all $x \in H, \omega \in \Omega$.
\end{enumerate}

A
random variable
\begin{equation*}\label{eqA5}
X:(\Omega,\mathcal{F})\to
(\mathbb{R}^+\setminus \{0\},\mathcal{B}(\mathbb{R}^+\setminus\{0\}))
\end{equation*}
  is called {\em tempered from above} if

\begin{equation*}
    \limsup_{t\to\pm\infty}\frac{\log^+ X(\theta_t\omega)}{|t|}=0
\end{equation*}
for $\omega$ contained in a $\{\theta_t\}_{t\in\mathbb{R}}$
invariant set of full measure($t\to-\infty $ applies only to two-sided time). Such a random variable $X$ is
called {\em tempered from below}  if $X^{-1}$ is tempered from above. $X$ is call \emph{tempered}  if both tempered from above and tempered from below.
Arnold~\cite{Arn98} proved that the random variable is tempered if and if it is $\varepsilon$-slowly varying for some $\varepsilon\geq 0$.

  Boxler~\cite{Box} used the random norm in order to obtain the random variables $K_1(\omega)$,  $K_2(\omega)$, $K_3(\omega)$ are constants. However, for  comparing convenience in the same metric, we used the Hilbert norm and prove the random variables are slowly varying.

We  introduce an  infinite dimensional version of the
multiplicative ergodic theorem(\textsc{met}) with two side continuous time. 
\begin{theorem}\label{t2}
Let $U$ be a linear random dynamical system of compact operators
on $H$ satisfying the following integrability condition
\begin{eqnarray}\label{inte}
   \mathbb{E}\sup_{0\le t\le 1}\log^+\|U^{\pm}(t,\omega)\|+
    \mathbb{E}\sup_{0\le t\le
    1}\log^+\|U^{\pm}(1-t,\theta_t\omega)\|<\infty.
\end{eqnarray}
Then there is a measurable set  $\Omega_0 \in \mathcal{F}$  such
that  $\theta_t(\Omega_0)\subset \Omega_0$ for all $t \in \mathbb{R}$, and for
 each $\omega \in \Omega_0$, the limit
$$ \Lambda (\omega):= \lim_{t \to\pm \infty} [U(t,\omega)^* \circ U(t,\omega)]^{1/(2t)}  $$
exists in the uniform operator norm.  Each linear operator $\Lambda (\omega)$ is compact, non-negative and self-adjoint with a discrete spectrum
\begin{eqnarray*}
exp(\lambda_1)>exp(\lambda_2)>exp(\lambda_3)>\ldots,
\end{eqnarray*}
where the $\lambda_i$'s are distinct and non-random. Then there exist linear spaces $H= W_1(\omega)\oplus\cdots \oplus W_\infty(\omega)$,  $\dim W_i(\omega)=d_i$, $i=1,2,\ldots$, such that
\begin{align*}
 & \lim_{t\to\pm \infty}\frac{1}{t}\log\|U(t,\omega)x\|=\left\{ \begin{array}{ll}\lambda_i
   & \textrm{if  $x\in W_{i}(\omega),$}\\-\infty & \textrm{if $x\in W_{\infty}(\omega)$,}
\end{array} \right.
    \\&
    U(t,\omega)W_{i}(\omega)\subset W_{i}(\theta_t\omega).
\end{align*}
for $t\in  \mathbb{R}$ and all $\omega\in \Omega_0$.
\end{theorem}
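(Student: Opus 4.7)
The plan is to reduce to Ruelle's discrete-time multiplicative ergodic theorem on Hilbert space and then interpolate to continuous time using the integrability hypothesis~\eqref{inte}. First I would apply Ruelle's theorem to the discrete cocycle $n\mapsto U(n,\omega)$, $n\in\mathbb{N}$, over the measure-preserving transformation $\theta_1$: the compactness of each $U(n,\omega)$ and the bound $\mathbb{E}\log^+\|U(1,\omega)\|\le \mathbb{E}\sup_{0\le t\le 1}\log^+\|U(t,\omega)\|<\infty$ implied by~\eqref{inte} are exactly the hypotheses of that theorem. It produces a $\theta_1$-invariant full-measure set $\Omega_0^+$, a non-random decreasing sequence $\lambda_1>\lambda_2>\cdots$, a compact non-negative self-adjoint operator $\Lambda^+(\omega):=\lim_{n\to\infty}[U(n,\omega)^*U(n,\omega)]^{1/(2n)}$ with spectrum $\{e^{\lambda_i}\}$, and a $U$-equivariant splitting $H=W_1(\omega)\oplus\cdots\oplus W_\infty(\omega)$ on which the discrete Lyapunov exponents are attained.

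To promote this picture to real times, set $M(\omega):=\sup_{0\le s\le 1}\log^+\|U(s,\omega)\|$ and $\tilde M(\omega):=\sup_{0\le s\le 1}\log^+\|U(1-s,\theta_s\omega)\|$, both integrable by~\eqref{inte}. Birkhoff's ergodic theorem applied to $M\circ\theta_n$ and $\tilde M\circ\theta_n$ gives $M(\theta_n\omega)/n\to 0$ and $\tilde M(\theta_n\omega)/n\to 0$ almost surely; equivalently, $M$ and $\tilde M$ are tempered in the sense of the preceding definition. For $t\in[n,n+1)$ the cocycle identity $U(t,\omega)=U(t-n,\theta_n\omega)\circ U(n,\omega)$ then supplies the sandwich estimate
\begin{equation*}
\log\|U(n+1,\omega)x\|-\tilde M(\theta_n\omega)\;\le\;\log\|U(t,\omega)x\|\;\le\; M(\theta_n\omega)+\log\|U(n,\omega)x\|,
\end{equation*}
which, after dividing by $t$ and letting $t\to\infty$, transfers the discrete rate $\lambda_i$ on $W_i(\omega)$ to every real time; a parallel operator-norm interpolation transfers the existence of $\Lambda^+(\omega)$ to continuous $t$. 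Replaying the argument with the reversed cocycle $n\mapsto U(-n,\omega)$ (whose integrability comes from the $U^{-}$ part of~\eqref{inte}) yields the matching $t\to-\infty$ statement, and the standard two-sided Oseledets construction intersects the forward and backward filtrations to produce a single $\theta_t$-equivariant splitting $\{W_i(\omega)\}$ realising both one-sided limits.

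The hardest step is the last one: reconciling forward and backward filtrations into a genuine direct-sum decomposition and verifying that the resulting $W_i(\omega)$ are $\theta_t$-equivariant for \emph{every} real $t$, not merely for integer $t$. Temperedness of $M$ and $\tilde M$ supplies the uniform continuous-time control that lets the discrete Oseledets invariance pass to all $t$; without it the interpolation would fail to preserve the splitting. Compactness, self-adjointness and non-negativity of $\Lambda(\omega)$ are inherited under operator-norm limits from the finite-$t$ approximants, so once convergence at continuous times is established these properties are automatic, and the discrete spectrum follows from the spectral theorem for compact self-adjoint operators.
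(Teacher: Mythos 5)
Your overall route coincides with the paper's: apply a one-sided multiplicative ergodic theorem to the forward cocycle, do the same for the backward cocycle, and then merge the two filtrations (the paper cites Mohammed's one-sided continuous-time extension of Ruelle rather than re-deriving the discrete-to-continuous interpolation, but your sandwich estimate with the tempered variables $M,\tilde M$ is a correct way to transfer the vectorwise exponents to real times). The genuine gap is in your final step. You invoke ``the standard two-sided Oseledets construction'' to intersect the forward and backward filtrations, but in the infinite-dimensional Hilbert-space setting that construction is precisely what must be proved here, and it is where the paper's proof does its actual work. Concretely, writing $V_{r+1}(\omega)=[\oplus_{j=1}^{r}F_j(\omega)]^{\perp}$ for the forward filtration and $V_{-r}(\omega)=\oplus_{j=1}^{r}F_{-j}(\omega)$ for the backward one, you must show that almost surely $V_{r+1}(\omega)\cap V_{-r}(\omega)=\{0\}$ and $V_{r+1}(\omega)\oplus V_{-r}(\omega)=H$ before you may set $W_i(\omega)=V_i(\omega)\cap V_{-i}(\omega)$. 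The paper proves triviality of the intersection quantitatively: on the event $B_n$ where a vector $v$ in the intersection satisfies $\|U(n,\omega)v\|\le \|v\|e^{n(\lambda_{r+1}+\delta)}$ and $\|U^{-1}(n,\omega)v\|\le \|v\|e^{n(-\lambda_{r}+\delta)}$, the cocycle identity forces $\lambda_r-\lambda_{r+1}\le 2\delta$, and since $\mathbb{P}(B_n\cap\theta_n^{-1}B_n)\to\mathbb{P}(B)$ and $\delta$ is arbitrary, the bad set is null; complementarity then comes from the codimension count $\operatorname{codim}V_{r+1}(\omega)=\dim V_{-r}(\omega)$. None of this is supplied by your proposal, and appealing to the two-sided construction as ``standard'' is circular, since the statement being proved is exactly the two-sided MET in this generality.

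A secondary imprecision: the assertion that ``a parallel operator-norm interpolation transfers the existence of $\Lambda^+(\omega)$ to continuous $t$'' is not automatic. Convergence of $[U(t,\omega)^*U(t,\omega)]^{1/(2t)}$ in the uniform operator norm does not follow from the vectorwise exponent limits your sandwich estimate gives; the paper outsources this to Mohammed's continuous-time theorem, and you would need either to cite that result explicitly or to supply the operator-level argument. Temperedness of $M$ and $\tilde M$ controls growth between integer times, but it does not by itself upgrade strong (vectorwise) information to norm convergence of the symmetrised operators, nor does it establish equivariance of the limiting splitting under all real $t$ without first constructing that splitting via the intersection argument above.
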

\begin{remark}
The numbers $\lambda_1,\,\lambda_2,\cdots$ are called the {\em
Lyapunov exponents} associated to random dynamical system $U$. The set of these numbers
forms the
{\em Lyapunov spectrum}.
\end{remark}
\begin{proof}
Mohammed~\cite{Moh0,Moh} gave  a details of the extension to one side continuous time from  a discrete version of multiplicative ergodic theorem~\cite{Rue2}.  We show how to adapt to the Mohammed~\cite{Moh0,Moh}'s results to two side continuous time. The technique is from the discrete  result~\cite{Arn98,Rue1, Rue2}.

From the integration condition \eqref{inte} and  Theorem 2.1.1 of Mohammed~\cite{Moh}, we know that the  Lyapunov exponents associated to random dynamical system $U^{-}(t,\omega)=U^{-1}(t,\theta_t^{-1}\omega)$ is $-\lambda_1<-\lambda_2<-\lambda_3<\cdots$, and the corresponding eigenspace $F_{-i}(\omega)$ with $d_{i}=\dim F_{-i}(\omega)$, $\omega\in \Omega_1$. The  Lyapunov exponents associated to random dynamical system $U^{+}(t,\omega)=U(t,\omega)$ is $\lambda_1>\lambda_2>\lambda_3>\cdots$, and the corresponding eigenspace $F_{i}(\omega)$ with $d_i=\dim F_{i}(\omega)$, $\omega\in \Omega_2$.  Define
   \begin{eqnarray*}
V_r(\omega)=[\oplus _{j=1}^{r-1}F_j(\omega)]^{\perp},\\
V_{-r}(\omega)=\oplus_{j=1}^{r} F_{-j}(\omega),
\end{eqnarray*}
for $r=1,2,\ldots$.
Then
\begin{eqnarray*}
V_{-1}(\omega)\subset V_{-2}(\omega)\subset V_{-3}(\omega)\subset\cdots \subset V_{-\infty}(\omega)=H
\end{eqnarray*}
for  $\Omega_1\in \mathcal{F}$  such
that  for all $t \in \mathbb{R}^+$, $\theta_t^{-1}(\Omega_1)\subset \Omega_1$,  codim $ V_{r+1}(\omega)=\dim V_{-r}(\omega)$, and
\begin{align*}
 & \lim_{t\to \infty}\frac{1}{t}\log\|U^{-1}(t,\theta_t^{-1}\omega)x\|=\left\{ \begin{array}{ll}-\lambda_i
   & \textrm{if  $x\in V_{-i-1}(\omega)\setminus
     V_{-i}(\omega),$}\\\infty & \textrm{if $x\in V_{-1}(\omega)$,}
\end{array} \right.
    \\&
    U^{-1}(t,\theta^{-1}_t\omega)V_{-i}(\omega)\subset V_{-i}(\theta^{-1}_t\omega).
\end{align*}
That is   $\theta_t (\Omega_1)\subset \Omega_1$, and
\begin{align*}
 & \lim_{t\to -\infty}\frac{1}{t}\log\|U(t,\omega)x\|=\left\{ \begin{array}{ll}\lambda_i
   & \textrm{if  $x\in V_{-i-1}(\omega)\setminus
     V_{-i}(\omega),$}\\-\infty & \textrm{if $x\in V_{-1}(\omega)$,}
\end{array} \right.
    \\&
    U(t,\omega)V_{-i}(\omega)\subset V_{-i}(\theta_t\omega),
\end{align*}for all $t \in \mathbb{R}^-$.

We show that
for almost $\omega\in \Omega_1 \cap \Omega_2=:\Omega_0$,
 $V_{r+1}(\omega)\cap V_{-r}(\omega)=\emptyset$,  $V_{r+1}(\omega)\oplus V_{-r}(\omega)=H$.

Let $B:=\{\omega\in \Omega_0: V_{r+1}(\omega)\cap V_{-r}(\omega)\neq \emptyset\}$. Select $\omega\in B$ such that $v\in V_{r+1}(\omega)\cap V_{-r}(\omega)$. Given $\delta>0$, let $B_n$ be the subset of $B$ such that if $\omega\in B_n$,  for all $v\in V_{r+1}(\omega)\cap V_{-r}(\omega)$,
\begin{eqnarray}
\|U(n,\omega)v\|\leq \|v\|\exp[n(\lambda_{r+1}+\delta)],\label{th1eq1}\\
\|U^{-1}(n,\omega)v\|\leq \|v\|\exp[n(-\lambda_{r}+\delta)],
\end{eqnarray}
If  $v\in V_{r+1}(\omega)\cap V_{-r}(\omega)$, then $U(n,\omega)v\in V_{r+1}(\theta_n\omega)\cap V_{-r}(\theta_n\omega)$. If $\omega\in \theta_{n}^{-1}B_n\cap B_n$, we obtain
\begin{eqnarray}
\|v\|=\|U^{-1}(n,\omega)U(n,\omega)v\|\leq \|U(n,\omega)v\|\exp(n(-\lambda_{r}+\delta)).\label{th1eq2}
\end{eqnarray}
Equations \eqref{th1eq1} and \eqref{th1eq2} implies $\lambda_r-\lambda_{r+1}\leq 2\delta$. Since $\mathbb{P}(B_n\cap \theta_n^{-1}B_n)\rightarrow \mathbb{P}(B)$ and $\delta$ is arbitrary,  we conclude $\mathbb{P}(B)=0$.     $V_{r+1}(\omega)\oplus V_{-r}(\omega)=H$ follows codim $ V_{r+1}=\dim V_{-r}$.

Let $W_i(\omega)=V_{i}(\omega)\cap V_{-i}(\omega)$, we obtain $H=V_1(\omega)\cap(V_{-1}(\omega)\oplus V_2(\omega))\cap \cdots \cap V_{-\infty}(\omega)=W_1(\omega)\oplus\cdots W_{\infty}(\omega)$.
\end{proof}

\begin{remark}
  If the time is discrete, Ruelle ~\cite{Rue2} prove a similar conclusion replacing of the assumption \eqref{inte} to $\mathbb{E}\log^+\|U^{\pm}(n,\omega)\|<\infty.$
\end{remark}


\section{MET implies exponential trichotomy} \label{Exp}

In this section,
we  consider the exponential trichotomy for  random dynamical systems $U(t,\omega)$, $t\in \mathbb{T}$  in a Hilbert space.

First we introduce the exponential trichotomy,  which generalizes the exponential dichotomy~\cite{Cop, Duan1, Duan2, Sac}.

\begin{definition}
$U(t, \omega)$, $t\in \mathbb{T}$  is said to be exponential trichotomy  if there exists a $\theta_t$-invariant set
$\tilde \Omega \subset \Omega$ of full measure such that for each
$\omega\in \tilde \Omega$, the phase space $H$ splits into
\[
H=E^s(\omega)\oplus E^c(\omega)\oplus E^u(\omega)
\]
of closed subspaces satisfying
\begin{enumerate}
\item This splitting is invariant under $\;U(t, \omega)$:
\begin{align*}
&U(t, \omega)  E^s(\theta_t\omega)\subset E^s(\theta_t\omega), \\
&U(t, \omega)  E^c(\theta_t\omega)\subset E^c(\theta_t\omega), \\
&U(t, \omega) E^u(\theta_t\omega)\subset E^u(\theta_t\omega);\\
\end{align*}
 \item  There are
 constants $\alpha >\gamma>
\beta>0$, and tempered   variables $K^s(\omega):\tilde
\Omega \to (0, \infty)$, $K^c(\omega):\tilde
\Omega \to (0, \infty)$ and  $K^u(\omega):\tilde
\Omega \to (0, \infty)$  such that
\begin{align}
\label{a}\|U(t, \omega)\|& \leq K^s(\omega)
\exp{[-\alpha t]}
\quad\hbox{for } t\geq 0, \\
\label{b}\|U(t, \omega)\|& \leq K^c(\omega)
\exp{\gamma|t|}
\quad\hbox{for } t\in  \mathbb{R},\\
\label{c}\|U(t,
\omega) \|&
\leq K^u(\omega) \exp{\beta t} \quad\hbox{for } t\leq  0.
\end{align}
\end{enumerate}
\end{definition}

\begin{lemma}\label{12}
Suppose that the following exponential integrability condition is
satisfied
\begin{eqnarray} \label{eq21}
   \mathbb{E}\log^+\sup_{0\le t\le 1}\|U^{\pm}(t,\omega)\|+
    \mathbb{E}\log^+\sup_{0\le t\le
    1}\|U^{\pm}(1-t,\theta_t\omega)\|<\infty.
\end{eqnarray}
Then there exists a $\{\theta_t\}_{t\in\mathbb{R}}$
invariant set of full $\mathbb{P}$-measure and a constant
$H(\omega) $ such that
\begin{equation*}
    \|U^u(t,\omega)\|\geq H(\omega) \exp({a
    t})\quad\text{for }t\leq 0,\,\omega\in\tilde\Omega
\end{equation*}
for a sufficiently large $a$.
\end{lemma}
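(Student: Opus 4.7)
The plan is to derive the lower bound by inverting an upper bound on the forward cocycle restricted to the unstable subspace. Theorem~\ref{t2} yields a splitting $H=W_1(\omega)\oplus\cdots\oplus W_\infty(\omega)$ in which each $W_i(\omega)$ is finite-dimensional and invariant under the cocycle; the unstable summand $E^u(\omega)$ (the sum of the $W_i$'s with positive Lyapunov exponents) is therefore finite-dimensional, and $U^u(t,\omega):E^u(\omega)\to E^u(\theta_t\omega)$ is a linear bijection whose inverse is $U^u(-t,\theta_t\omega)$. This follows from the cocycle identity $U(-t,\theta_t\omega)\circ U(t,\omega)=U(0,\omega)=\mathrm{id}$ together with the invariance stated in Theorem~\ref{t2}. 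Consequently, for $t\le 0$,
$$
 \|U^u(t,\omega)\| \;\ge\; \frac{1}{\|U^u(-t,\theta_t\omega)\|} \;\ge\; \frac{1}{\|U(-t,\theta_t\omega)\|},
$$
so it suffices to establish a suitable upper bound on $\|U(s,\theta_t\omega)\|$ for $s=-t\ge 0$.

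The upper bound is produced from the integrability condition \eqref{eq21}. Setting $\phi(\omega):=\log^+\sup_{0\le u\le 1}\|U(u,\omega)\|\in L^1(\mathbb{P})$, the cocycle inequality $\|U(n,\omega)\|\le\prod_{k=0}^{n-1}\|U(1,\theta_k\omega)\|$ combined with Birkhoff's ergodic theorem (this is precisely the Kingman-type chaining step underpinning the MET in \cite{Rue2,Moh}) gives, on a $\theta$-invariant set of full measure,
$$
 \limsup_{s\to\infty}\frac{1}{s}\log\|U(s,\omega)\| \;\le\; \lambda_1.
$$
Folding in the $L^1$-integrability of the one-step supremum to handle the fractional part of $s$, this yields: for any $a_0>\lambda_1$ there exists a positive tempered random variable $C_0(\omega)$ with $\|U(s,\omega)\|\le C_0(\omega)\exp(a_0 s)$ for all $s\ge 0$.

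Finally, I transfer the bound from the base point $\theta_t\omega$ back to $\omega$ by temperedness: for any $\varepsilon>0$ there is a random variable $C_\varepsilon(\omega)$ with $C_0(\theta_t\omega)\le C_\varepsilon(\omega)\exp(\varepsilon|t|)$, so for $t\le 0$,
$$
 \|U(-t,\theta_t\omega)\| \;\le\; C_0(\theta_t\omega)\exp(-a_0 t) \;\le\; C_\varepsilon(\omega)\exp\bigl(-(a_0+\varepsilon)t\bigr).
$$
Setting $a:=a_0+\varepsilon$ (which can be made as large as one wishes by enlarging $a_0$) and $H(\omega):=1/C_\varepsilon(\omega)$ delivers $\|U^u(t,\omega)\|\ge H(\omega)\exp(at)$ on the intersection of the $\theta$-invariant full-measure sets produced above, proving the claim. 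The main technical obstacle is the base-point shift, which is precisely what forces $a$ to be taken strictly above $\lambda_1$ rather than equal to it; a more hidden subtlety is that the inversion step of the first paragraph is only available because the MET supplies the finite-dimensional invariant splitting, whereas the full semi-flow $U(t,\omega)$ on $H$ need not be invertible for $t<0$.
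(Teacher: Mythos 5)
Your argument is correct in substance, but it follows a genuinely different route from the paper. The paper works directly with the restricted cocycle in backward time: it applies the subadditive (Kingman) ergodic theorem to $\log\|U^u(i,\omega)\|$ as $i\to-\infty$ to obtain an exact exponential rate $a$ on a $\theta$-invariant set of full measure, and then controls the fractional part of $t$ by a Borel--Cantelli argument on the one-step supremum $D_1(\omega)=\log\sup_{-1\le t\le 0}\|U^u(t,\omega)\|$; no inversion, no appeal to the top exponent $\lambda_1$, and no temperedness of a forward constant are needed, and the resulting $a$ is the natural backward rate of $U^u$ (essentially the smallest unstable exponent). You instead invert $U^u(t,\omega)$ on the finite-dimensional unstable space, dominate the inverse by the full cocycle norm at the shifted fiber $\theta_t\omega$, bound that by growth at rate $a_0>\lambda_1$ with a tempered prefactor, and then shift the base point by temperedness. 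This buys a shorter argument that only uses the forward-time estimate, at the cost of a weaker constant (any $a>\lambda_1$ rather than the backward rate of $U^u$), which is still enough since the lemma only asks for some sufficiently large $a$. Two caveats: the inequality $\|U^u(t,\omega)\|\ge 1/\|U^u(-t,\theta_t\omega)\|$ needs $E^u(\omega)\neq\{0\}$ and bijectivity of the restriction onto $E^u(\theta_t\omega)$, which requires the two-sided invariance from Theorem~\ref{t2} and not merely the stated inclusion; and the temperedness of your $C_0(\omega)$ is asserted rather than proved --- it is a standard fact, but making it honest requires exactly the Kingman-corollary plus Borel--Cantelli argument that the paper carries out for $K^s(\omega)$ in the proof of Theorem~\ref{t3}, so you should either cite that step or reproduce it.
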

\begin{proof}

Since
\begin{equation*}
    \|U^u(t,\omega)\|\leq \|U^-(t,\omega)\|,
\end{equation*}
$U^-(t,\omega)= U(-t,\omega)$, we conclude that
$\mathbb{E}\log^+\|U^u(-1,\omega)\|<\infty$.
By subadditive ergodic theorem,  there exists a set
of measure one such that
\begin{equation}\label{x1}
    \lim_{i\to-\infty}\frac{1}{i}\log
    \|U^u(i,\omega)\|=
 a.
\end{equation}

Set
\begin{equation*}
    \Omega_n^1:=\{\omega\in\Omega:\lim_{i\to-\infty}
    \frac{1}{i}\log \|U^u(i,\theta_n\omega)\|=
     a\}\in\mathcal{F},\\
\end{equation*}

\begin{equation*}
    \Omega^1:=\bigcap_{n\in\mathbb{Z}}\Omega_n^1.
\end{equation*}

The set $ \Omega^1$ is $\{\theta_t\}_{t\in\mathbb{Z}}$--invariant and has
probability one.

Let $D_1(\omega)=\log\sup_{-1\le t\le 0}\|U^u(t,\omega)\|$. Since $ \mathbb{E}D^+_1(\omega)< \infty$, by  Borel-Cantelli lemma,  there exists a full measurable set $\Omega^2$ so that
\begin{equation*}
    \limsup_{i\to-\infty}\frac{1}{i}D_1(\theta_{i+n}\omega)=0,\qquad
    n\in\mathbb{N}.
\end{equation*}

 For $\omega\in\Omega^1\cap\Omega^2$,
\begin{align}\label{eqn1}
\begin{split}
   \log\|U^u(t,\omega)\|= &
    \log\|U^u(t-[t],\theta_{[t]}\omega)\|
    +\log\|U^u([t],\omega)\|.
 \end{split}
\end{align}
 Thus
$\limsup_{t\to-\infty}\log\|U_\lambda^u(t,\omega)\|/t\ge
a$. We obtain the conclusion.
\end{proof}

Now we show that the multiplicative ergodic theorem (\textsc{met}) in Section \ref{pre} implies  exponential
trichotomy in  Hilbert spaces.
\begin{theorem}\label{t3}
Suppose that the following exponential integrability condition is
satisfied
\begin{eqnarray}\label{eq22}
   \mathbb{E}\log^+\sup_{0\le t\le 1}\|U^{\pm}(t,\omega)\|+
    \mathbb{E}\log^+\sup_{0\le t\le
    1}\|U^{\pm}(1-t,\theta_t\omega)\|<\infty.
\end{eqnarray}
and suppose that $\alpha, \beta, \gamma \in\mathbb{R}$ is not contained in the
Lyapunov spectrums such that $\cdots<\lambda_i<-\beta<-\gamma<\lambda_j<\gamma<\alpha<\cdots<\lambda_2<\lambda_1$.
 Then there exists a
$\{\theta_t\}_{t\in\mathbb{R}}$ invariant set $\tilde \Omega$ of
full measure such that for $\omega\in\tilde \Omega$ we have the
following properties: There exist linear spaces
$E^u(\omega),\,E^c(\omega),\,E^s(\omega)$ such that
\begin{equation*}
    H=E^u(\omega)\oplus E^c(\omega)\oplus E^s(\omega).
\end{equation*}

There exist   tempered
random variables $K^s(\omega)$,  $K^c(\omega)$ and  $K^u(\omega)$ such that,
\begin{align*}
    \|U^s(t,\omega) \|&\leq
    K^s(\omega) \; \exp({-\alpha t}),  \quad t\geq 0;\\
     \|U^c(t,\omega)\|&\leq
    K^c(\omega) \; \exp({\gamma |t|}), \quad t \in \mathbb{R};\\
    \|U^u(t,\omega)\|&\le
    K^u(\omega) \; \exp({\beta t}), \quad t\leq 0.
\end{align*}
\end{theorem}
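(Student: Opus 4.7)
The plan is to feed the Oseledets decomposition from Theorem~\ref{t2} into the spectral cuts dictated by the hypothesis. On the $\theta_t$-invariant full-measure set $\Omega_0$ supplied by the multiplicative ergodic theorem, I group the summands $W_k(\omega)$ according to which band their exponent sits in:
\begin{equation*}
E^u(\omega):=\bigoplus_{\lambda_k>\alpha}W_k(\omega),\quad E^c(\omega):=\bigoplus_{-\gamma<\lambda_k<\gamma}W_k(\omega),\quad E^s(\omega):=\Bigl(\bigoplus_{\lambda_k<-\beta}W_k(\omega)\Bigr)\oplus W_\infty(\omega).
\end{equation*}
Because $\alpha,\gamma,\beta$ are not Lyapunov exponents, the sum is direct and exhausts $H$; because the spectrum of a compact cocycle accumulates only at $-\infty$, both $E^u$ and $E^c$ are finite-dimensional while $E^s$ carries the tail. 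The invariance $U(t,\omega)E^{\star}(\omega)\subset E^{\star}(\theta_t\omega)$ is inherited componentwise from the invariance $U(t,\omega)W_k(\omega)\subset W_k(\theta_t\omega)$ asserted by Theorem~\ref{t2}.

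To promote pointwise Lyapunov rates to uniform bounds with random prefactors I would set
\begin{equation*}
K^s(\omega):=\sup_{t\geq 0}\|U(t,\omega)|_{E^s(\omega)}\|\,e^{\alpha t},\quad K^c(\omega):=\sup_{t\in\mathbb{R}}\|U(t,\omega)|_{E^c(\omega)}\|\,e^{-\gamma|t|},\quad K^u(\omega):=\sup_{t\leq 0}\|U(t,\omega)|_{E^u(\omega)}\|\,e^{-\beta t}.
\end{equation*}
Almost-sure finiteness of each supremum follows from the strict spectral gap on the corresponding piece together with the integrability hypothesis~\eqref{eq22} to control the short-time excursions: the subadditive ergodic theorem applied to $\log\|U(t,\omega)|_{E^\star(\omega)}\|$ gives the rate $\lim_{t}\tfrac1t\log\|U(t,\omega)|_{E^\star}\|$ equal to the extreme exponent on $E^\star$, which by hypothesis is strictly bounded away from the rate imposed by the weight. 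For the unstable prefactor I additionally invert $U(t,\omega)$ on $E^u(\omega)$ for $t\leq 0$; this is legitimate because $\dim E^u<\infty$ and the MET rules out directions of infinite decay inside $E^u$, while the matching lower bound from Lemma~\ref{12} ensures $U|_{E^u}$ has no kernel.

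The principal obstacle is not the existence of these suprema but their \emph{temperedness}, and my plan mirrors the Borel--Cantelli calculation already used in the proof of Lemma~\ref{12}. Setting $D^\star(\omega):=\log^+\sup_{0\leq s\leq 1}\|U(s,\omega)|_{E^\star(\omega)}\|$, hypothesis~\eqref{eq22} gives $\mathbb{E}D^\star<\infty$, so Borel--Cantelli yields $\tfrac1n D^\star(\theta_n\omega)\to 0$ a.s.\ along integers, and the cocycle identity $U(t,\omega)=U(t-[t],\theta_{[t]}\omega)\circ U([t],\omega)$ extends the conclusion to continuous time exactly as in equation~\eqref{eqn1} of Lemma~\ref{12}. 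A secondary technicality is temperedness of the Oseledets projectors $\pi^\star(\omega)$, which is needed to interpret $\|U(t,\omega)|_{E^\star(\omega)}\|$ as an operator norm on $H$ rather than on a moving subspace; this is part of the Mohammed--Ruelle construction already invoked in the proof of Theorem~\ref{t2} and can be cited rather than re-derived.
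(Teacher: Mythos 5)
Your construction of the splitting and of the candidate prefactors is the same as the paper's: group the Oseledets spaces of Theorem~\ref{t2} into spectral bands, set $K^\star(\omega)$ equal to a supremum of exponentially weighted norms, get a.s.\ finiteness from the spectral gap, and then argue temperedness. The gap is in the temperedness step, which is the actual content of the theorem. Your plan --- Borel--Cantelli applied to $D^\star(\omega)=\log^+\sup_{0\le s\le 1}\|U(s,\omega)|_{E^\star(\omega)}\|$ plus the cocycle identity $U(t,\omega)=U(t-[t],\theta_{[t]}\omega)\circ U([t],\omega)$ ``exactly as in Lemma~\ref{12}'' --- only controls the fractional-time factors. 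It says nothing about how $\sup_{n\ge 0}\|U^s(n,\theta_r\omega)\|e^{\alpha n}$ grows with the shift $r$, and that is precisely what temperedness of $K^s(\theta_r\omega)$ requires; a.s.\ finiteness of a random variable does not imply temperedness. On the stable (and center) band the cocycle consists of compact operators on an infinite-dimensional subspace, so you cannot transfer estimates from base point $\theta_r\omega$ back to $\omega$ by composing with an inverse, and some uniform-in-$(n,m)$ input is indispensable. The paper supplies it with the Kingman-type estimate (Ruelle's Corollary A.2): for every $\epsilon>0$ there is a finite random variable $K_\epsilon(\omega)$ with
\begin{equation*}
\log\|U^s(n-m,\theta_m\omega)\|\le (n-m)\lambda_- + n\epsilon + K_\epsilon(\omega)\qquad\text{for all }n>m,
\end{equation*}
and only then uses the Borel--Cantelli control of $D$ to splice in the two sub-unit-time pieces, obtaining $\log^+K^s(\theta_r\omega)=o(r)$. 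This uniform subadditive bound (or an equivalent device) is missing from your proposal, so as written the temperedness of $K^s$ and $K^c$ is not established.

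For $K^u$ the situation is similar but the missing ingredient is different: there the paper does not use a Kingman bound at all, but rather the lower bound of Lemma~\ref{12}, $\|U^u(t,\omega)\|\ge H(\omega)e^{at}$ for $t\le 0$, which plays the role of an ``inverse'' estimate and allows the shift computation $K^u(\theta_s\omega)\lesssim K^u(\omega)H(\omega)^{-1}e^{(\lambda_+-\varepsilon)s-as}$ as $s\to-\infty$. You invoke Lemma~\ref{12} only to justify injectivity of $U|_{E^u}$, and then fall back on the same insufficient Borel--Cantelli step for temperedness, so this case is also incomplete. Two minor points in your favour: flagging temperedness/measurability of the Oseledets projections (which the paper silently ignores) is a legitimate observation and citing the Mohammed--Ruelle construction for it is reasonable; and your remark that $E^u,E^c$ are finite-dimensional while $E^s$ carries $W_\infty$ is correct and consistent with the paper's splitting.
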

\begin{proof} Let the Lyapunov spectrums  $\lambda_1>\lambda_2>\cdots>\lambda_+>\lambda_j=0>\lambda_->\lambda_{i+1}>\cdots$,  $E^s(\omega)=W_{i+1}\oplus W_{i+2}\oplus \ldots$.  We start with $K^s(\omega)$. Define
\begin{equation*}
    K^s(\omega):=\sup_{t\in
    \mathbb{R}^+}\frac{\|U^s(t,\omega)\|}{\exp[{(\lambda_-+\varepsilon) t}]},
\end{equation*}
where $-\alpha=\lambda_-+\varepsilon$.  Then by the multiplicative ergodic theorem the random variable is finite.
Let $F_n(\omega) = \log \| U^s(n, \omega)\|$.  Since \begin{equation*}\mathbb{E}F_1^+(\omega)=\mathbb{E}\log^+\|U^s(1,\omega)\|<+\infty \end{equation*}and $F_{m+n}(\omega)=\log\|U^s(m+n,\omega)\|\leq\log\|U^s(m,\theta_n\omega)\|\|U^s(n,\omega)\|= F_n(\omega)+F_m(\theta_n\omega)$.  By subadditive
ergodic theorem,  there exists a
$\{\theta_t\}_{t\in\mathbb{Z}}-$invariant measurable function
$F(\omega)$ such that

\begin{equation}\label{kingman1}
 F(\omega)=\lim_{n\to \infty} \frac1{n}F_n(\omega)
 =  \lim_{n\to \infty} \frac{1}{n} \log\|U^s(n, \omega)\| \leq
\lambda_-.
\end{equation}

By a consequence of Kingman's subadditive ergodic theorem\cite[Corollary A.2]{Rue2}, for every $\epsilon>0$, there
is a finite-valued random variable $K_{\epsilon}(\omega)$   such that when $n>m$,
\begin{equation}\label{kingman2}
\log\|U^s(n-m, \theta_m\omega)\| \leq
(n-m)\lambda_-+n{\epsilon} +K_{\epsilon}(\omega), \;\;  a.s.
\end{equation}

Let $D(\omega)=\log^+\sup_{0\le t\le 1}\|U(t,\omega)\|+\log^+\sup_{0\le t\le
    1}\|U(1-t,\theta_t\omega)\|$, then $ \mathbb{E}D(\omega)<\infty$.  From the  Borel-Cantelli lemma,
there is a measurable set $\Omega_1$ such that $\mathbb{P}(\Omega_1)=1$, and
\begin{eqnarray*}
  \lim\limits_{i\to\infty}\frac 1 i D(\theta_{i+n}\omega)=\limsup_{i\to\infty}\frac 1 i D(\theta_{i+n}\omega)=0, \quad \omega\in \Omega_2, n\in \mathbb{N}.
\end{eqnarray*}
From
 \begin{align*}
    \|U^s(t,\theta_r\omega)\|&\le
    \|U^s(1+t-[t]-1-[r]+r,\theta_{[r]+[t]}\omega)\|\\
    &\times
    \|U^s([t]-1,\theta_{1+[r]}\omega)\|\times\|U^s(1-r+[r],\theta_r\omega)\|\\
    &\leq
    \exp[{D(\theta_{[r]+[t]-1}\omega)}]\exp[{K_\varepsilon(\omega)+\epsilon([r]+1)+(\lambda_-+{\epsilon})([t]-1)}]\\&\ \quad
\exp[{D(\theta_{[r]}\omega)}].
\end{align*}
From the
 definition of $K^s(\omega)$, $$\lim\limits_{r\to \infty}\frac {\log^+ K^s(\theta_r\omega)}{r}=0.$$ So the random variable $K^s(\omega)$ is  tempered.

Next we prove $K^u(\omega)$ is tempered. Let $E^u(\omega)=W_1(\omega)\oplus W_2(\omega)\oplus \ldots \oplus W_k(\omega) \ldots$ for $\lambda_k\geq \lambda_+$.
Since
\begin{equation}
\lim_{t\to -\infty}\frac{1}{t}\log\|U^u(t,\omega)x\|\geq\lambda_+>0, \quad x\in E^u(\omega),
\end{equation}
then there exists a $\varepsilon>0$ and $t_1(x,\varepsilon,\omega)<0$ such that $\|U(t,\omega)x\|\geq e^{t(\lambda_+-\varepsilon)}$ for $t<t_1$. Let $\beta=\lambda_+-\varepsilon$.   
 From Lemma \ref{12}, assume that
\begin{equation}\label{eq9}
    \|U^u(t,\omega)\|\geq H(\omega)  \exp[{a t}]\quad
    \text{for }\quad t\leq 0,\,\omega\in\Omega,
\end{equation}
for a measurable function $H(\omega)$.
 We show that
\begin{equation*}
   {K^u(\omega)}:=\sup_{t\leq
    0}\frac{\|U^u(t,\omega)\|}{\exp[{(\lambda_+-\varepsilon)  t}]}
    \end{equation*}
is a tempered  random variable in $(0,\infty)$. 
Since
\begin{equation*}
    \exp({a t})\le
    \frac{\|U^u(t,\omega)\|}{H(\omega) } \quad\text{for
    any } t\leq 0.
\end{equation*}
We then see that for $s<0$
\begin{align*}
  { K^u(\theta_s\omega)}\exp({a s})&=\sup_{t\le
    0}\frac{\|U^u(t,\theta_s\omega)\|}{\exp[{(\lambda_+-\varepsilon)  t}]} \exp({a s})\\
    &\le \sup_{t\le
    0}\frac{\|U^u(t,\theta_s\omega)\|
    \|U^u(s,\omega)\|} {
    \exp[{(\lambda_+-\varepsilon) (t+s)}]} \frac 1 {H(\omega)} \exp[{(\lambda_+-\varepsilon) s}]\\
    &\le
    \sup_{t\le
    0}\frac{
    \|U^u(t+s,\omega)\|}{\exp[{(\lambda_+-\varepsilon)  (t+s)}]}\frac 1 {H(\omega)} \exp[{(\lambda_+-\varepsilon)  s}]\\
    &=
    \frac{K^u(\omega)} {H(\omega) } \exp[{(\lambda_+-\varepsilon)  s}]
\end{align*}
which goes to zero for $s\to-\infty$.

Similarly  there exists a tempered random variable $
K^c(\omega)$.

\end{proof}







\section{Stochastic center manifolds} \label{Sec3}

We introduce the definition of stochastic center manifolds.
A basic tool of proving the existence of  stochastic center manifolds is to define an appropriate function space, which is a Banach space.

\begin{definition}
A random set~$M(\omega)$ is called an (forward) invariant set for a random
dynamical system~$\varphi(t,  x,\omega)$ if
$\varphi(t,M(\omega),\omega)\subset M(\theta_t\omega)$ for
$t\geq 0$\,.
If we can represent~$M(\omega)$ by a graph of a  (Lipschitz)
mapping from the center subspace to its complement,
$h^c(\cdot,\omega): H^c\to H^u\oplus H^s$,
such that
$M(\omega)=\{v + h^c(v,\omega) \mid v\in H^c\}$,
$h^c(0,\omega)=0$\,, and the tangency condition that  the derivative $Dh^c(0,\omega)=0$\,, $h^c(v,\cdot)$ is measurable for every $v\in H^c$,
then $M(\omega)$~is called a (Lipschitz) center
manifold, often denoted as~$M^c(\omega)$.
\end{definition}
First we prove the eixistenc of center manifolds for discrete time random dynamical systems.
  Let $U(n,x,\omega)$ is the linearization of random dynamical system $\varphi(n,x,\omega)$, i.e. the Fr\'{e}chet derivative  $D\varphi(n,x,\omega)$ at point $x$. Then  $U(n,x,\omega)$ is also a random  dynamical system. Let $F(n,x,\omega)=\varphi(n,x,\omega)-U(n,x,\omega)$, then
 \begin{eqnarray} \label{dis1} \varphi(n,x,\omega)=U(n,x,\omega)+F(n,x,\omega).  \end{eqnarray}  
Assume $F(1,x,\omega)$ is Lipschitz continuous on $H$ with Lipschitz constant $\operatorname{Lip} F(\omega)$ and   $F(1,0,\omega) = 0$, $DF(1,0,\omega) = 0$. We introduce a modified equation by using a cut-off technique. Let
$\sigma(s)$ be a $C^\infty$ function from $(-\infty,\infty)$ to $[0,1]$ with
\begin{eqnarray*}
&\sigma(s) = 1 \,\mbox{for} \, |s|\leq 1,  \quad
\sigma(s) = 0 \, \mbox{for}  \, |s|\geq 2, \\
&\sup \limits_{s\in \mathbb{R}} |\sigma'(s)|\leq 2.
 \end{eqnarray*}
Let $\rho:\Omega\to (0,\infty)$ be a  tempered random variable such that $G(x,\omega)=\sigma(\frac {|x|}{\rho(\omega)})F(1,x,\omega)$,  We  assume it to be
Lipschitz continuous on $H$, that is,
 \begin{eqnarray*}
|G(x_1,\omega) -G(x_2,\omega)| \leq \operatorname{Lip} F(\omega)\rho(\omega)|x_1 - x_2|
 \end{eqnarray*}
with the sufficiently small Lipschitz constant $\operatorname{Lip} F(\omega)\rho(\omega) > 0$.
We show existence of a
center manifold for the discrete random dynamical system $\phi(n,x,\omega)$.

For each $\eta>0$\,,  we denote the Banach
space
\begin{align*}
D_\eta={}&\left \{ \phi:\mathbb{Z}\to H \mid  \sup_{n\in \mathbb{Z}}\exp\left[{-\eta |n|}\right] | \phi(n)|
 < \infty\right\}
\end{align*}
with the norm
\[\left|\phi\right|_{D_\eta}=\sup_{n\in \mathbb{Z}}\exp\left[{-\eta |n|}\right] |\phi(n)|.
\]
Let
\[N^c(\omega)=\left\{x_0\in H \mid \varphi(\cdot, x_0, \omega) \in D_\eta\right\},
\]
where  $\varphi(n,  x_0,\omega)$~is  the orbit  of the random dynamical system $\varphi(n, x, \omega)$ with
initial data $\varphi(0,x_0,\omega)= x_0$\,.
\begin{theorem} \label{Thm(3.2)} Suppose $U(n,\omega)$ satisfies the exponential trichotomy.
 If   $\gamma < \eta<\min\{\beta, \alpha\}$ such that the nonlinearity term is sufficiently small,
\begin{eqnarray}\label{pre2}&K(\theta\omega)  \operatorname{Lip} F(\omega)\rho(\omega) {(\frac{1}{\eta-\gamma}+\frac{1}{\beta-\eta}
+\frac{1}{\alpha-\eta})}=:\rho'(\omega)<
1\,, \label{dpr1}\end{eqnarray} where $K(\omega)=\max\{K^s(\omega),K^c(\omega),K^u(\omega)\}$, then there exists a  center manifold for
 the random  dynamical systems $\varphi(n,\omega,x)$,  which is written as the graph
\[
N^c(\omega) = \{v+h^c(v,\omega)\mid v \in H^c\},
\]
where $h^c(\cdot,  \omega) : H^c\to  H^u\oplus H^s$ is a Lipschitz continuous mapping from the center subspace and
satisfies $h^c(0,\omega)=0$, $Dh^c(0,\omega)=0$\,. 
\end{theorem}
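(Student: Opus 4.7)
My plan is to prove Theorem~\ref{Thm(3.2)} by the Lyapunov--Perron method adapted to the random setting. Let $\pi^s,\pi^c,\pi^u$ be the projections associated with the splitting $H=E^s(\omega)\oplus E^c(\omega)\oplus E^u(\omega)$ and write $U^\sigma(n,\omega)=U(n,\omega)\pi^\sigma(\omega)$ for $\sigma\in\{s,c,u\}$. For each fixed $v\in H^c(\omega)$ I would characterise the orbits of the cut-off cocycle $\tilde\varphi(n,x,\omega)=U(n,\omega)x+G(x,\omega)$-iterated appropriately lying in $D_\eta$ as fixed points of the Lyapunov--Perron operator
\begin{align*}
\mathcal{J}(\phi)(n) ={}& U^c(n,\omega)v
+\sum_{k=0}^{n-1}U^c(n-k-1,\theta_{k+1}\omega)\pi^c G(\phi(k),\theta_k\omega)\\
&-\sum_{k=n}^{\infty}U^s(n-k-1,\theta_{k+1}\omega)\pi^s G(\phi(k),\theta_k\omega)\\
&+\sum_{k=-\infty}^{n-1}U^u(n-k-1,\theta_{k+1}\omega)\pi^u G(\phi(k),\theta_k\omega),
\end{align*}
the usual convention being that summation indices run forward for the center/unstable contributions and backward for the stable one.

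The first step is to verify that $\mathcal{J}$ maps $D_\eta$ into itself. Using the trichotomy bounds \eqref{a}--\eqref{c}, the weight $e^{-\eta|n|}$, and the identity $G(0,\omega)=0$ with Lipschitz constant $\operatorname{Lip}F(\omega)\rho(\omega)$, each of the three sums reduces to a geometric series. The choice $\gamma<\eta<\min\{\alpha,\beta\}$ makes all three series convergent, producing factors $\frac{1}{\eta-\gamma}$, $\frac{1}{\alpha-\eta}$, and $\frac{1}{\beta-\eta}$ respectively; the tempered constants $K^\sigma(\theta\omega)$ arise from shifting $\omega$ by one step inside the exponential trichotomy estimates. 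Combining these produces precisely the quantity $\rho'(\omega)$ of \eqref{dpr1}. By the same estimates, for $\phi_1,\phi_2\in D_\eta$ one obtains
\begin{equation*}
|\mathcal{J}(\phi_1)-\mathcal{J}(\phi_2)|_{D_\eta}\le\rho'(\omega)\,|\phi_1-\phi_2|_{D_\eta},
\end{equation*}
so that by the smallness assumption $\rho'(\omega)<1$ the map $\mathcal{J}$ is a contraction on the Banach space $D_\eta$ and admits a unique fixed point $\phi(\cdot,v,\omega)$.

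Having obtained $\phi(\cdot,v,\omega)$, I would define
\begin{equation*}
h^c(v,\omega):=-\sum_{k=0}^{\infty}U^s(-k-1,\theta_{k+1}\omega)\pi^s G(\phi(k),\theta_k\omega)
+\sum_{k=-\infty}^{-1}U^u(-k-1,\theta_{k+1}\omega)\pi^u G(\phi(k),\theta_k\omega),
\end{equation*}
i.e.\ $\pi^u\phi(0,v,\omega)+\pi^s\phi(0,v,\omega)$, so that $\phi(0,v,\omega)=v+h^c(v,\omega)$ lies in the prospective manifold. The three required properties then follow: $h^c(0,\omega)=0$ because $\phi\equiv0$ is the unique fixed point when $v=0$ (using $G(0,\omega)=0$); Lipschitz continuity in $v$ is inherited from the contraction estimate because the dependence of $\mathcal{J}$ on $v$ is bounded by $K^c(\omega)|v|$; and tangency $Dh^c(0,\omega)=0$ follows by differentiating the fixed point equation at $v=0$ and using $DG(0,\omega)=0$. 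Invariance $\varphi(1,N^c(\omega),\omega)\subset N^c(\theta_1\omega)$ is a consequence of the translation identity $\phi(n+1,v,\omega)=\phi(n,\tilde v,\theta_1\omega)$ for an appropriate $\tilde v\in H^c(\theta_1\omega)$, which comes from applying $\mathcal{J}$ and the cocycle property of $U$. Finally measurability of $h^c(v,\cdot)$ follows because the fixed point is the uniform limit of the measurable Picard iterates $\mathcal{J}^n(0)$.

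The main obstacle I expect is \emph{not} the fixed point argument itself, but keeping careful track of the random tempered constants in the weighted estimates: $K^\sigma$ depends on $\omega$ and must be replaced by $K^\sigma(\theta_k\omega)$ at each summation step, and one has to absorb these into the single tempered constant $K(\theta\omega)$ appearing in \eqref{dpr1} using the slow variation of tempered random variables proved in Section~\ref{pre}. A secondary subtlety is the identification $N^c(\omega)=\{v+h^c(v,\omega):v\in H^c\}$: one inclusion is immediate from the construction of $\phi$, while the reverse requires that any orbit of $\tilde\varphi$ lying in $D_\eta$ with center projection $v$ at time $0$ must equal $\phi(\cdot,v,\omega)$, which in turn rests on the uniqueness of fixed points of $\mathcal{J}$.
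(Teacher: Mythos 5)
Your overall strategy coincides with the paper's: the same Lyapunov--Perron operator on the weighted space $D_\eta$, the same contraction estimate producing exactly $\rho'(\omega)$, the same definition $h^c(v,\omega)=P^s x(0,v,\omega)\oplus P^u x(0,v,\omega)$, the same route to measurability via Picard iterates, and the same invariance argument. Two remarks, one cosmetic and one substantive. The cosmetic one: in your displayed operator the superscripts are swapped relative to the convention you state in words -- the sum over future times $k\ge n$ must carry $U^u$ and $\pi^u$ (using the backward bound $\|U^u(t,\omega)\|\le K^u(\omega)e^{\beta t}$, $t\le 0$), and the sum over past times $k\le n-1$ must carry $U^s$ and $\pi^s$; as written, your second and third sums would invoke $U^s$ at negative times and $U^u$ at positive times, where the trichotomy gives no decay, so the series would not converge in $D_\eta$.

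The substantive gap is the tangency condition $Dh^c(0,\omega)=0$, which is part of the theorem's conclusion. You dispose of it with ``differentiating the fixed point equation at $v=0$ and using $DG(0,\omega)=0$,'' but the fixed point $x(\cdot,v,\omega)$ is a priori only Lipschitz in $v$ (that is all the uniform contraction principle gives), so there is nothing yet to differentiate. Establishing differentiability in $v$ is the technical bulk of the paper's proof: one passes to a strictly smaller weight $D_{\eta-\delta}$ (chosen so the smallness condition still holds), introduces the bounded operators $S$ and the formally differentiated operator $T$, shows $\|T\|<1$ so that $\operatorname{id}-T$ is invertible, writes the difference of orbits as $(\operatorname{id}-T)^{-1}S(v-v_0)+(\operatorname{id}-T)^{-1}I$, and then proves the remainder estimate $|I|_{D_{\eta-\delta}}=o(|v-v_0|)$ by splitting $I$ into finitely many pieces (tails in $i$, tails in $n$, and a compact block) and exploiting the gap $\delta$ between the two weights to make the tails small before invoking continuity of $DG$ on the finite block. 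Without this (or an equivalent fibre-contraction argument), your proposal proves only a Lipschitz centre manifold with $h^c(0,\omega)=0$, not the tangency statement; note also that the weight gap is essential -- the $o(|v-v_0|)$ estimate fails if one tries to run the argument in $D_\eta$ itself. The rest of your outline (uniqueness giving the reverse inclusion $N^c(\omega)\subset\{v+h^c(v,\omega)\}$, and tracking the tempered constants $K^\sigma(\theta_k\omega)$) matches the paper and is sound.
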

\begin{proof}
We claim that $x_0 \in N^c(\omega)$ if and only if  there  $\varphi(\cdot, x_0, \omega)\in
D_\eta$  with
\begin{align}\label{eq(d3.2)}\begin{split}
\varphi(n, x_0,\omega)={}&U^c(n,v,\omega)+\sum\limits_{i=0}^{n-1}U^c(n-1-i,\theta^{i+1}\omega)G^c(\theta^i\omega,x_i)
 \\&{} -\sum\limits_{i=n-1}^{\infty}U^u(n-1-i,\theta^{i+1}\omega)G^u(\theta^i\omega,x_i)\\&{}+\sum\limits_{i=-\infty}^{n-1}U^s(n-1-i,\theta^{i+1}\omega)
 G^s(\theta^i\omega,x_i)\ \ \mbox{for} \, {n\neq 0},\\
\varphi(0, x_0,\omega)=&{}v -\sum\limits_{i=-1}^{\infty}U^u(-1-i,\theta^{i+1}\omega)G^u(\theta^i\omega,x_i)\\&{}+\sum\limits_{i=-\infty}^{-1}U^s(-1-i,\theta^{i+1}\omega)G^s(\theta^i\omega,x_i),
\end{split}
\end{align}
where $v = P^c x_0$.

  Consider $x_0\in N^c(\omega)$, let
  $x_n=\varphi(n,x_0,\omega)$. Using equation \eqref{dis1}
and induction, $x_n$ satisfies the discrete variation of constants formula
  \begin{eqnarray}\label{dis2}
    x_n=U(n-k,\theta^k\omega)x_k+\sum\limits_{i=k}^{n-1}U(n-1-i,\theta^{i+1}\omega)G(\theta^i\omega,x_i),
  \end{eqnarray}
  for each $k<n$, and
    \begin{eqnarray}\label{dis3}
    x_n=U(n-k,\theta^k\omega)x_k-\sum\limits_{i=n-1}^{k}U(n-1-i,\theta^{i+1}\omega)G(\theta^i\omega,x_i),
  \end{eqnarray}
  for each $k>n$.
Project $x_n$ to each subspace,
\begin{align}&\label{eq(d3.3)}
P^cx(n, x_0, \omega) = U^c(n,\omega)
v + \sum\limits_{i=0}^{n-1}U^c(n-1-i,\theta^{i+1}\omega)G^c(\theta^i\omega,x_i).
\\&
\label{eq(d3.4)}
P^u x_n =U^u(n-k,\theta^k\omega)P^ux_k-\sum\limits_{i=n-1}^{k}U^u(n-1-i,\theta^{i+1}\omega)G^u(\theta^i\omega,x_i).
\\&\label{eq(d3.44)}
P^s x_n =U^s(n-k,\omega)P^s x_k+\sum\limits_{i=k}^{n-1}U^s(n-1-i,\theta^{i+1}\omega)G^s(\theta^i\omega,x_i).
\end{align}
Since $x_n \in D_\eta$\,, we have for $n < k $ that the magnitude
\begin{align*}
 | U^u(n-k,\theta^k\omega)x_k^u|\leq K(\theta^k\omega)\exp[\beta(n-k)]\exp(\eta k)|\varphi|_{D_\eta}
  \\ \to 0 \quad\text{as }k \to +\infty.
\end{align*}
For  $n > k $\,,
\begin{align*}
| U^s(n-k,\theta^k\omega)x_k^s|\leq K(\theta^k\omega)\exp[-\alpha(n-k)]\exp(\eta k)|\varphi|_{D_\eta}
   \\
   \to 0 \quad\text{as }k \to -\infty.
\end{align*}
Then, taking the two separate limits $\tau \to \pm  \infty$   in~(\ref{eq(d3.4)}) and~\eqref{eq(d3.44)} respectively,
\begin{align}&\label{eq(d3.5)}
P^ux_n=-\sum\limits_{i=\infty}^{n-1}U^u(n-1-i,\theta^{i+1}\omega)G^u(\theta^i\omega,x_i).
\\&\label{eq(d3.55)}
P^sx_n=\sum\limits_{i=-\infty}^{n-1}U^s(n-1-i,\theta^{i+1}\omega)G^s(\theta^i\omega,x_i).
\end{align}
Combining~(\ref{eq(d3.3)}),  (\ref{eq(d3.5)}) and~(\ref{eq(d3.55)}), we have~(\ref{eq(d3.2)}).
The converse  follows from a direct computation.

For each $x_n\in D_\eta$ wich $v=P^cx_0$. We define a map $y_n=Y^c(x_n,v)$
\begin{align}\label{eq(d3.22)}\begin{split}
Y^c(x_n,v):={}&U^u(n,\omega)v+\sum\limits_{i=0}^{n-1}U^c(n-1-i,\theta^{i+1}\omega)G^c(\theta^i\omega,x_i)\\&{}
 -\sum\limits_{i=\infty}^{n-1}U^u(n-1-i,\theta^{i+1}\omega)G^u(\theta^i\omega,x_i) \\&{} \sum\limits_{i=-\infty}^{n-1}U^s(n-1-i,\theta^{i+1}\omega)G^s(\theta^i\omega,x_i).
\end{split}
\end{align}
$ J^c$~is
well-defined from $D_\eta\times H^c$ to the functions space~$D_{\eta}$.
For each
$x_n, \bar x_n \in D_{\eta}$\,, we have  that for $\gamma < {\eta}<\min\{\beta, \alpha\}$,
\begin{align}\label{eq(d3.6)}
\begin{split}
&| Y^c(x_n,v ) -  Y^c (\bar x_n, v ) |_{D_\eta} \\
&\leq\sup_{n\in \mathbb{Z}} \Bigg\{\exp\left[
-{\eta} |n| \right] \bigg |\sum\limits_{i=0}^{n-1}
U^c(n-i-1,\theta^{i+1}\omega)
(P^cG(\theta^i\omega)x_n-P^cG(\theta^i\omega)\bar x_n)
\\& \quad
-\sum\limits_{i=\infty}^{n-1}U^s(n-i-1,\theta^{i+1}\omega)(P^uG(\theta^i\omega)x_n-P^uG(\theta^i\omega)\bar x_n)\\& \quad +\sum\limits_{i=-\infty}^{n-1}U^u(n-i-1,\theta^{i+1}\omega)(P^sG(\theta^i\omega)x_n-P^sG(\theta^i\omega)\bar x_n)\bigg|
\Bigg\}\\
&\leq\sup_{n\in \mathbb{Z}} \Bigg\{  |x_n-\bar
{x_n}|_{D_\eta}\bigg|\sum\limits_{i=0}^{n-1}
K(\theta^{i+1}\omega)\operatorname{Lip} F(\theta^{i}\omega)\rho(\theta^{i}\omega) \exp[{(\gamma-{\eta})|n-i-1|}]
\\&  \quad
-\sum\limits_{i=\infty}^{n-1}K(\theta^{i+1}\omega)\operatorname{Lip} F(\theta^{i}\omega)\rho(\theta^{i}\omega)\exp[{({\beta-\eta})(n-i-1)}]\\& \quad
+\sum\limits_{i=-\infty}^{n-1}K(\theta^{i+1}\omega)\operatorname{Lip} F(\theta^{i}\omega)\rho(\theta^{i}\omega)\exp[{({\eta}-\alpha)(n-i-1)}]\bigg |
\Bigg\}\\
&\leq   |x_n-\bar
x_n|_{D_\eta}\rho'(\omega).
\end{split}
\end{align}

From equation~\eqref{eq(d3.6)}, $ Y^c$~is Lipschitz continuous on~$D_\eta$.
By the theorem's precondition~\eqref{dpr1},
$Y^c$~is a uniform contraction with respect to the parameter~$v$.
By the uniform contraction mapping principle,
for each $v \in H^c$,  the mapping $Y^c (\cdot , v )$ has a
unique fixed point $x(\cdot,v, \omega) \in D_\eta$\,.
Combining equation~\eqref{eq(d3.22)} and equation~\eqref{eq(d3.6)},
\begin{align}\label{eq(d3.7)}
&|x(\cdot ,  v,   \omega) - x (\cdot,  \bar v, \omega
)|_{D_\eta}\nonumber \\&{}\quad \leq \frac{K(\omega)}{1-\rho'(\omega) \left(\frac{1}{\eta-\gamma}+\frac{1}{\beta-\eta}
+\frac{1}{\alpha-\eta}\right)}|v - \bar v |,
\end{align}
for each
 fixed point~$x(\cdot,v, \omega)$.
 Then
$x(n,   \cdot, \omega )$~is Lipschitz from the center subspace~$H^c$ to slowly varying  functions~$D_\eta$.
 $x(\cdot,  v, \omega  )\in D_\eta$ that satisfies the equation~(\ref{eq(d3.2)}).
Since $x(\cdot , v,   \omega)$ can be an $\omega$-wise limit of the
iteration of contraction mapping~$Y^c$ starting at~$0$ and $Y^c$
maps a $\mathcal{F}$-measurable function to a $\mathcal{F}$-measurable function,
$x(\cdot, v,   \omega)$~is $\mathcal{F}$-measurable.
Combining $x(\cdot, v, \omega)$~is  continuous with respect to~$H^c$, we have $x(\cdot , v,   \omega)$ is measurable with respect to~$(\cdot, v, \omega)$.

Let $h^c (v,\omega) := P^s x (0, v, \omega)\oplus P^u x(0, v, \omega)$.
Then from equation \eqref{eq(d3.2)},
\begin{align*}
h^c(v, \omega)&=-\sum\limits_{i=-1}^{\infty}U^u(-1-i,\theta^{i+1}\omega)G^u(\theta^i\omega,x_i)\\&{}+\sum\limits_{i=-\infty}^{-1}U^s(-1-i,\theta^{i+1}\omega)G^s(\theta^i\omega,x_i)\,.
\end{align*}
We see that~$h^c$ is $\mathcal{F}$-measurable and  $h^c(0,\omega)=0$\,. Now we show $Dh^c(0,\omega)=0$
By the theorem's precondition~\eqref{dpr1},
there exists a small number $\delta>0$ such that $\gamma <
\eta-\eta'<\min\{\beta,\alpha\}$ and for all  $ 0 \leq \eta' \leq 2\delta$\,,
\begin{eqnarray*}
K(\theta\omega)  \operatorname{Lip} F(\omega)\rho(\omega) \small{\left(\frac{1}{(\eta-\eta')-\gamma}+\frac{1}{\beta-(\eta-\eta')}+\frac{1}{\alpha-(\eta-\eta')}\right)}<
1\,.
\end{eqnarray*}
Thus, $Y^c(x_n,v)$ is a uniform contraction in $D_{\eta-\eta'}
\subset  D_\eta$ for any $0\leq \eta'\leq 2\delta$\,.
Therefore,
$\varphi(\cdot, v,\omega)\in D_{\eta-\eta'}$\,.
For $v\in H^c$, we
define two operators: let
\[
Sv=U(n,\omega)v \,,
\]
and for $\phi\in {C_{\eta-\delta}}$ let
\begin{align}\label{eq(d3.221)}\begin{split}
T\phi={}&\sum\limits_{i=0}^{n-1}U^c(n-1-i,\theta^{i+1}\omega)DG^c(\theta^i\omega,\varphi(i,v_0,\omega))\phi_i
 \\&{} -\sum\limits_{i=n-1}^{\infty}U^u(n-1-i,\theta^{i+1}\omega)DG^u(\theta^i\omega,\varphi(i,v_0,\omega))\phi_i\\&{}+\sum\limits_{i=-\infty}^{n-1}U^s(n-1-i,\theta^{i+1}\omega)
 DG^s(\theta^i\omega,\varphi(i,v_0,\omega))\phi_i\ \ \mbox{for} \, {n\neq 0},\\
T\phi_0=&{} -\sum\limits_{i=-1}^{\infty}U^u(-1-i,\theta^{i+1}\omega)DG^u(\theta^i\omega,\varphi(i,v_0,\omega))\phi_i\\&{}+\sum\limits_{i=-\infty}^{-1}U^s(-1-i,\theta^{i+1}\omega)
DG^s(\theta^i\omega,\varphi(i,v_0,\omega))\phi_i,
\end{split}
\end{align}

From the assumption,
$S$~is a bounded linear operator from center subspace~$H^c$ to Banach space~$D_{\eta-\delta}$.
 $T$~is a bounded linear
operator from~${D_{\eta-\delta}}$ to itself and
\begin{eqnarray*}
\|T\| <
1\,,
\end{eqnarray*}
which implies that the operator $\operatorname{id}-T$ is invertible in~${D_{\eta-\delta}}$.
For $v, v_0\in H^c$, we set
\begin{align*}
\begin{split}
I={}&\sum\limits_{i=0}^{n-1}U^c(n-1-i,\theta^{i+1}\omega)[G^c(\theta^i\omega,\varphi(i,v,\omega))-G^c(\theta^i\omega,\varphi(i,v_0,\omega)) \\&{} -D
G^c(\theta^i\omega,\varphi(i,v_0,\omega))(\varphi(i,v,\omega)-\varphi(i,v_0,\omega))]
 \\&{} -\sum\limits_{i=n-1}^{\infty}U^u(n-1-i,\theta^{i+1}\omega)[G^u(\theta^i\omega,\varphi(i,v,\omega))-G^u(\theta^i\omega,\varphi(i,v_0,\omega)) \\&{} -D
G^u(\theta^i\omega,\varphi(i,v_0,\omega))(\varphi(i,v,\omega)-\varphi(i,v_0,\omega))]\\&{}+\sum\limits_{i=-\infty}^{n-1}U^s(n-1-i,\theta^{i+1}\omega)
 [G^s(\theta^i\omega,\varphi(i,v,\omega))-G^s(\theta^i\omega,\varphi(i,v_0,\omega)) \\&{} -D
G^s(\theta^i\omega,\varphi(i,v_0,\omega))(\varphi(i,v,\omega)-\varphi(i,v_0,\omega))]\ \ \mbox{for} \, {n\neq 0},\\
I_0=&{} -\sum\limits_{i=-1}^{\infty}U^u(-1-i,\theta^{i+1}\omega)[G^u(\theta^i\omega,\varphi(i,v,\omega))-G^u(\theta^i\omega,\varphi(i,v_0,\omega)) \\&{} -D
G^u(\theta^i\omega,\varphi(i,v_0,\omega))(\varphi(i,v,\omega)-\varphi(i,v_0,\omega))]\\&{}+\sum\limits_{i=-\infty}^{-1}U^s(-1-i,\theta^{i+1}\omega)
[G^s(\theta^i\omega,\varphi(i,v,\omega))-G^s(\theta^i\omega,\varphi(i,v_0,\omega)) \\&{} -D
G^s(\theta^i\omega,\varphi(i,v_0,\omega))(\varphi(i,v,\omega)-\varphi(i,v_0,\omega))],
\end{split}
\end{align*}

We obtain
\begin{align}\label{eq(4.1)}
\begin{split}
&\varphi(\cdot,v, \omega)-\varphi(\cdot,v_0,
\omega)-T (\varphi(\cdot,v,
\omega)-\varphi(\cdot,v_0,
\omega))
=S(v-v_0) + I,\\
\end{split}
\end{align}
which yields
\begin{align*}
\varphi(\cdot,v, \omega)-\varphi(\cdot,v_0,
\omega)=(\operatorname{id}-T)^{-1}S(v-v_0)+(\operatorname{id}-T)^{-1}I.
\end{align*}
If  $|I|_{{{D_{\eta-\delta}}}}=o(|v-v_0|)$ as $v\to
v_0$, then  $\varphi(\cdot,v, \omega) $ is differentiable in~$v$ and its
derivative satisfies $D_v \varphi(n, v,\omega)\in L(H^c,
{{D_{\eta-\delta}}})$,  where $L(H^c, {{D_{\eta-\delta}}})$ is the
usual space of bounded linear operators and
\begin{align}\label{eq(d4.2)}
\begin{split}
D_v \varphi(n, v,\omega)={}&\sum\limits_{i=0}^{n-1}U^c(n-1-i,\theta^{i+1}\omega)DG^c(\theta^i\omega,\varphi(i,v,\omega))D_v \varphi(i, v,\omega)
 \\&{} -\sum\limits_{i=n-1}^{\infty}U^u(n-1-i,\theta^{i+1}\omega)DG^u(\theta^i\omega,\varphi(i,v,\omega))D_v \varphi(i, v,\omega)\\&{}+\sum\limits_{i=-\infty}^{n-1}U^s(n-1-i,\theta^{i+1}\omega)
DG^s(\theta^i\omega,\varphi(i,v,\omega))D_v \varphi(i, v,\omega)\ \ \mbox{for} \, {n\neq 0},\\
D_v \varphi(o, v,\omega)=&{} -\sum\limits_{i=-1}^{\infty}U^u(-1-i,\theta^{i+1}\omega )DG^c(\theta^i\omega,\varphi(i,v,\omega)))D_v \varphi(n, v,\omega)\\&{}+\sum\limits_{i=-\infty}^{-1}U^s(-1-i,\theta^{i+1}\omega)
DG^s(\theta^i\omega,\varphi(i,v,\omega))D_v \varphi(i, v,\omega),
\end{split}
\end{align}
Now we prove that \begin{eqnarray}|I|_{{{D_{\eta-\delta}}}} = o(|v-v_0|), \quad |I_0|_{{{D_{\eta-\delta}}}} = o(|v-v_0|)\label{Claim1}\end{eqnarray} as
$v\to v_0$\,.
We divide~$I$ into several sufficient small parts.
Let $N$ be a large  positive number to be chosen
later and define the following ten integrals
\begin{align*}
I_1={}&\exp[{(-(\eta-\delta))|n|}]\sum\limits_{i=N}^{n-1}U^c(n-1-i,\theta^{i+1}\omega)[G^c(\theta^i\omega,\varphi(i,v,\omega))\\&{}-G^c(\theta^i\omega,\varphi(i,v_0,\omega)) -D
G^c(\theta^i\omega,\varphi(i,v_0,\omega))(\varphi(i,v,\omega)-\varphi(i,v_0,\omega))],\\
\end{align*}
for $n> N$\,.
\begin{align*}
I_1'={}&\exp[{(-(\eta-\delta))|n|}]\sum\limits_{i=n-1}^{-N}U^c(n-1-i,\theta^{i+1}\omega)[G^c(\theta^i\omega,\varphi(i,v,\omega))\\&{}-G^c(\theta^i\omega,\varphi(i,v_0,\omega)) -D
G^c(\theta^i\omega,\varphi(i,v_0,\omega))(\varphi(i,v,\omega)-\varphi(i,v_0,\omega))],
\end{align*}
for $n< -N$\,.
\begin{align*}
I_2={}&{}\exp[{(-(\eta-\delta))|n|}]\sum\limits_{i=0}^{N}U^c(n-1-i,\theta^{i+1}\omega)[G^c(\theta^i\omega,\varphi(i,v,\omega))\\&{}-G^c(\theta^i\omega,\varphi(i,v_0,\omega)) -D
G^c(\theta^i\omega,\varphi(i,v_0,\omega))(\varphi(i,v,\omega)-\varphi(i,v_0,\omega))],
\end{align*}
for $0<n \leq N$\,.
\begin{align*}
I_2'={}&\exp[{(-(\eta-\delta))|n|}]\sum\limits_{i=-N}^{0}U^c(n-1-i,\theta^{i+1}\omega)[G^c(\theta^i\omega,\varphi(i,v,\omega))\\&{}-G^c(\theta^i\omega,\varphi(i,v_0,\omega)) -D
G^c(\theta^i\omega,\varphi(i,v_0,\omega))(\varphi(i,v,\omega)-\varphi(i,v_0,\omega))],
\end{align*}
for $-N \leq n > 0$\,.

Let $\overline N$ be a large  positive number to be chosen later.
For
$|n| < \overline N$\,, we set
\begin{align*}
I_3={}& -\exp[{(-(\eta-\delta))|n|}]\sum\limits_{i=n-1}^{N}U^u(n-1-i,\theta^{i+1}\omega)[G^u(\theta^i\omega,\varphi(i,v,\omega))\\&{}-G^u(\theta^i\omega,\varphi(i,v_0,\omega)) -D
G^u(\theta^i\omega,\varphi(i,v_0,\omega))(\varphi(i,v,\omega)-\varphi(i,v_0,\omega))].
\\
I_3'={}&\exp[{(-(\eta-\delta))|n|}]\sum\limits_{i=-N}^{n-1}U^s(n-1-i,\theta^{i+1}\omega)
 [G^s(\theta^i\omega,\varphi(i,v,\omega))\\&{}-G^s(\theta^i\omega,\varphi(i,v_0,\omega))  -D
G^s(\theta^i\omega,\varphi(i,v_0,\omega))(\varphi(i,v,\omega)-\varphi(i,v_0,\omega))].
\\
I_4={}& -\exp[{(-(\eta-\delta))|n|}]\sum\limits_{i=N}^{\infty}U^u(n-1-i,\theta^{i+1}\omega)[G^u(\theta^i\omega,\varphi(i,v,\omega))\\&{} -G^u(\theta^i\omega,\varphi(i,v_0,\omega)) -D
G^u(\theta^i\omega,\varphi(i,v_0,\omega))(\varphi(i,v,\omega)-\varphi(i,v_0,\omega))].
\\
I_4'={}&\exp[{(-(\eta-\delta))|n|}]\sum\limits_{i=-\infty}^{-N}U^s(n-1-i,\theta^{i+1}\omega)
 [G^s(\theta^i\omega,\varphi(i,v,\omega))\\&{}-G^s(\theta^i\omega,\varphi(i,v_0,\omega))  -D
G^s(\theta^i\omega,\varphi(i,v_0,\omega))(\varphi(i,v,\omega)-\varphi(i,v_0,\omega))].
\end{align*}

For $|n| \geq \overline N$\,, we set
\begin{align*}
I_5={}&-\exp[{(-(\eta-\delta))|n|}]\sum\limits_{i=n-1}^{\infty}U^u(n-1-i,\theta^{i+1}\omega)[G^u(\theta^i\omega,\varphi(i,v,\omega))\\&{}-G^u(\theta^i\omega,\varphi(i,v_0,\omega)) -D
G^u(\theta^i\omega,\varphi(i,v_0,\omega))(\varphi(i,v,\omega)-\varphi(i,v_0,\omega))].
\\
I_5'={}&\exp[{(-(\eta-\delta))|n|}]\sum\limits_{i=-\infty}^{n-1}U^s(n-1-i,\theta^{i+1}\omega)
 [G^s(\theta^i\omega,\varphi(i,v,\omega))\\&{}-G^s(\theta^i\omega,\varphi(i,v_0,\omega))  -D
G^s(\theta^i\omega,\varphi(i,v_0,\omega))(\varphi(i,v,\omega)-\varphi(i,v_0,\omega))].
\end{align*}

It is sufficient to show that for any $\epsilon >0$ there is a
$\sigma
>0$ such that if $ |v-v_0| \leq \sigma $\,, then
$|I|_{{{D_{\eta-\delta}}}} \leq \epsilon |v-v_0|$.
Note that
\begin{align*}
|I|_{{{D_{\eta-\delta}}}}& \leq \sup_{n>N} I_1 + \sup_{N\geq n> 0}
I_2+ \sup_{n< -N} I_1' + \sup_{-N\geq n> 0}
I_2'+ \sup_{ |n|< \overline N} I_3+\sup_{ |n|< \overline N} I_4
+ \sup_{|n|\geq \overline N} I_5\\&\quad  +\sup_{ |n|< \overline N} I_3'+\sup_{  |n|< \overline N} I_4'
+ \sup_{|n |\geq \overline N} I_5'.
\end{align*}
A computation similar to~(\ref{eq(d3.7)}) implies that
\begin{align*}
|I_1|_{D_{\eta-\delta}} &\leq \sum_{i=N}^{n-1} 2K(\theta^{i+1}\omega)\operatorname{Lip} F(\theta^i\omega) \rho(\theta^i\omega)\\&
\exp[{(\gamma-(\eta-\delta))|n-i-1|}]\exp({-\delta |i|})
|\varphi(\cdot, v,\omega)-\varphi(\cdot,v_0,\omega)|_{D_{\eta-2\delta}}  \\
& \leq \frac{2K(\omega)\rho'(\omega) \exp({-\delta
N})}{1-\rho'(\omega) \left[\frac{1}{(\eta-2\delta)-\gamma}+\frac{1}{\beta-(\eta-2\delta)}+\frac{1}{\alpha-(\eta-2\delta)}\right]}
|v - v_0 |.
\end{align*}

 Choose~$N$ so large that
\begin{align*}
\frac{2K(\omega)\rho'(\omega) \exp({-\delta
N})}{1-\rho'(\omega) \left[\frac{1}{(\eta-2\delta)-\gamma}+\frac{1}{\beta-(\eta-2\delta)}+\frac{1}{\alpha-(\eta-2\delta)}\right]}
\leq \frac{1}{8} \epsilon.
\end{align*}

Hence for such~$N$ we have that
\begin{equation*}
\sup_{n> N}I_1 \leq \frac{1}{8}\epsilon |v-v_0|.
\end{equation*}
Fixing such~$N$, for~$I_2$ we have that
\begin{align*}
I_2 &\leq \sum\limits_{i=0}^N\exp[{(-(\eta-\delta))|n|}] K(\theta^{i+1}\omega) \exp[{\gamma|i-1|}]\\
&\quad\Big\{
\int^1_0\big |
D_uG(\theta_i\omega,\tau \varphi(i,v,\omega)+(1-\tau)\varphi(i,v_0,\omega))  -D_uG(\theta_i\omega,\varphi(i,v_0,\omega))\big |\,\Big\}d\tau\\
&\quad
|\varphi(\cdot, v,\omega)-\varphi(\cdot,v_0,\omega)|_{{{C_{\eta-\delta}}}} \, \\
& \leq \frac{K(\omega)|v-v_0|}{1-\rho'(\omega)\left[\frac{1}{\eta-\delta-\gamma}+\frac{1}{\beta-(\eta-\delta)}+\frac{1}{\alpha-(\eta-\delta)}\right]}\\&
\quad \sum\limits_{i=0}^N \exp[{(\gamma-(\eta-\delta))|n-i-1|}]\Big\{ \int^1_0\big |
D_uG(\theta_i\omega,\tau \varphi(i,v,\omega)+(1-\tau)\varphi(i,v_0,\omega))\\
&\quad-D_uG(\theta_i\omega,u(i,v_0,\omega))\big | \,d\tau\Big\}\,ds\,.
\end{align*}
From the continuity of  the integrand in~$(i, \cdot)$,   the last integral is continuous at the point~$v_0$.
Thus, we have that there
is a $\sigma_1
>0$ such that if $|v-v_0| \leq \sigma_1$\,, then
\begin{equation*}
\sup_{N\geq n> 0}I_2 \leq \frac{1}{8}\epsilon |v-v_0|.
\end{equation*}
Therefore, if  $|v-v_0| \leq \sigma_1$\,, then
\begin{equation*}
\sup_{n>N} I_1 + \sup_{N\geq n> 0}
I_2 \leq \frac{1}{4}\epsilon
|v-v_0|.
\end{equation*}
In the same way,   there
is a $\sigma_1'
>0$ such that if  if  $|v-v_0| \leq \sigma_1'$, then
\begin{equation*}
\sup_{n<-N} I_1' + \sup_{-N\leq n < 0}
I_2' \leq \frac{1}{4}\epsilon
|v-v_0|.
\end{equation*}
Similarly, by choosing~$\overline N$ to be sufficiently large,
\begin{equation*}
\sup_{| n|<\overline N} I_4 + \sup_{|n|\geq \overline N}I_5 \leq
\frac{1}{8}\epsilon|v-v_0|,
\end{equation*}
\begin{equation*}
\sup_{| n|<\overline N} I_4' + \sup_{|n|\geq \overline N}I_5' \leq
\frac{1}{8}\epsilon|v-v_0|,
\end{equation*}
and for fixed such~$\overline N$, there exists $\sigma_2>0$ such that
if $|v-v_0| \leq \sigma_2$\,, then
\begin{equation*}
\sup_{  |n|< \overline N}I_3  \leq \frac{1}{8}\epsilon
|v_1-v_2|
\quad\text{and}\quad
\sup_{| n|<\overline N}I_3' \leq \frac{1}{8}\epsilon
|v_1-v_2|.
\end{equation*}
Taking $\sigma = \min \{\sigma_1, \sigma_1', \sigma_2\}$, we have that if
$|v-v_0| \leq \sigma$\,, then
\begin{equation*}
|I|_{{{D_{\eta-\delta}}}} \leq  \epsilon |v-v_0|.
\end{equation*}
Therefore  $|I|_{{{D_{\eta-\delta}}}} = o(|v-v_0|)$ as $v\to
v_0$\,.
The tangency condition $Dh^c(0,\omega)=0$ is from  equation~\eqref{eq(d4.2)}.

 From the definition of~$h^c(v,
\omega)$ and the claim that $ x_0 \in N^c(\omega)$  if and only if
there exists \begin{align*}\varphi(\cdot, x_0, \omega)~\in~D_\eta\end{align*} with $\varphi(0,x_0,\omega)=x_0$ and satisfies~(\ref{eq(d3.2)}) it follows that $ x_0 \in N^c(\omega)$ if and only
if there exists $v \in H^c$ such that $ x_0=v+h^c(v,
\omega)$, therefore,
\begin{equation*}
N^c(\omega) = \{v+ h^c(v, \omega) \mid v \in H^c\}.
\end{equation*}

Next we prove
that for any $x\in H$\,, the function
\begin{equation}\label{eq(d3.8)}
\omega\to\inf_{v\in H^c}\left |x-(v+ h^c(v, \omega))\right |
\end{equation}
is measurable.
Let $H'$~be a countable dense subset of the separable space~$H$.
From  the continuity of~$h^c(\cdot,
\omega)$,
\begin{equation}\label{eq(d3.9)}
\inf_{v\in H^c}|x-(v+ h^c(v, \omega))|
=\inf_{y\in H'}|x-P^cy-h^c(P^cy, \omega)|.
\end{equation}
The measurability of~(\ref{eq(d3.8)}) follows since
$\omega\to h^c(P^cy,\omega)$ is measurable for any $y\in H'$.

Finally, we show that $N^c(\omega)$~is invariant, that is for each
$x_0\in N^c(\omega)$, $\varphi(s, x_0, \omega) \in N^c(\theta_s\omega)$
for all $ s\geq 0$\,.
Since for $s \geq 0$\,,
$\varphi(n,x_s,\theta^s\omega)=\varphi(n+s, x_0,\omega)\in D_\eta$, so $\varphi(s, x_0, \omega) \in
N^c(\theta_s\omega)$.
\end{proof}
Second we prove the eixistenc of center manifolds for continuous time random dynamical systems.
\begin{lemma}
  Let  $U(t,x,\omega)=D\varphi(t,x,\omega)$.  Suppose the random dynamical systems $\varphi(t,x,\omega)$ and $U(t,x,\omega)$ are  differentiable at $t=0$,
  \begin{eqnarray*}
   && f(\omega,x)=\frac d  {dt} \varphi(t,x,\omega)|_{t=0}, \\
  &&  A(\omega)x=\frac d  {dt} U(t,x,\omega)|_{t=0}.
  \end{eqnarray*}
  Then $\varphi(t,x,\omega)$ is the solution of   \begin{eqnarray*}
   && \frac {du}  {dt}=f(\theta_t\omega,u),\\
  &&  u(0,x,\omega)=x.
  \end{eqnarray*}
  $U(t,x,\omega)$ is the solution of
  \begin{eqnarray*}
   && \frac {dv}  {dt}=A(\theta_t\omega)v,\\
  &&  v(0,x,\omega)=x.
  \end{eqnarray*}
\end{lemma}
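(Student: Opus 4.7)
The plan is to reduce everything to the hypothesised differentiability at $t=0$ by invoking the cocycle identity, which lets us translate a derivative at a general time $t$ into a derivative at time $0$ based at the shifted noise $\theta_t\omega$ with updated initial condition $\varphi(t,x,\omega)$. This is the standard way to extract the generating vector field of a smooth cocycle, and the same trick — differentiated once in the spatial variable — will handle the linearisation $U$.

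For the first claim I would fix $\omega$ and $x$ and set $u(t):=\varphi(t,x,\omega)$. By property~(i) of a random dynamical system from Section~\ref{pre}, for each $t\in\mathbb{R}$ and each small $s$,
\[
\varphi(t+s,x,\omega)=\varphi\bigl(s,\varphi(t,x,\omega),\theta_t\omega\bigr).
\]
Differentiating both sides with respect to $s$ at $s=0$: the left side yields $u'(t)$, while the right side, by the very definition of $f$ applied with noise $\theta_t\omega$ and base point $u(t)$, yields $f(\theta_t\omega,u(t))$. Property~(ii) supplies $u(0)=x$, completing the first ODE.

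For the second claim I would first differentiate the $\varphi$-cocycle with respect to $x$ and apply the chain rule together with $U=D\varphi$, obtaining the multiplicative cocycle identity
\[
U(t+s,x,\omega)=U\bigl(s,\varphi(t,x,\omega),\theta_t\omega\bigr)\circ U(t,x,\omega).
\]
Differentiating at $s=0$ and using the definition of $A$ converts the $s$-derivative of the first factor into $A(\theta_t\omega)$ acting on the output of $U(t,x,\omega)$, which produces the linear equation $dv/dt=A(\theta_t\omega)v$. The initial condition $v(0,x,\omega)=x$ follows from $U(0,x,\omega)=\operatorname{id}$ applied to $x$.

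The main technical point to be careful with is that the hypothesis gives differentiability only at $t=0$, whereas the argument uses differentiability at every $t$. This is inherited from the cocycle identity, but only provided the $t=0$ hypothesis is read as holding simultaneously for all initial points and for all $\omega$ in a $\theta_t$-invariant full-measure set; without that uniformity the passage from time~$0$ to time~$t$ does not close. A secondary point, specific to the $U$-equation, is that one needs enough regularity of $\varphi$ in $x$ (essentially $C^2$) for the chain-rule form of the cocycle to be genuinely differentiable in $s$ — this is a smoothness issue on the cocycle rather than an obstruction to the algebraic derivation itself.
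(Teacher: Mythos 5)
Your proposal is correct and follows essentially the same route as the paper: both arguments differentiate the cocycle identity $\varphi(t+s,x,\omega)=\varphi(s,\varphi(t,x,\omega),\theta_t\omega)$ (and its spatial derivative, giving the multiplicative identity for $U=D\varphi$) in $s$ at $s=0$, using the definitions of $f$ and $A$ together with $\varphi(0,\cdot,\omega)=\operatorname{id}$ for the initial conditions. Your closing remarks on reading the $t=0$ differentiability uniformly over base points and shifted noise, and on the extra smoothness needed for the linearised identity, are reasonable caveats that the paper's brief proof passes over silently.
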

\begin{proof}
  Since $ \varphi(s+t,x,\omega)= \varphi(s,\cdot,\theta_t\omega)\circ \varphi(t,x,\omega)$, then
   \begin{eqnarray*}
   \frac { \varphi(s+t,x,\omega)-\varphi(t,x,\omega)} s= \frac {\varphi(s,\cdot,\theta_t\omega)\circ \varphi(t,x,\omega)-\varphi(t,x,\omega)} s.
    \end{eqnarray*}
    Let $s\rightarrow 0$ yields
    \begin{eqnarray*}\frac {d\varphi}  {dt}=f(\theta_t\omega,\varphi).  \end{eqnarray*} Similar
     \begin{eqnarray*}
   \frac { D\varphi(s+t,x,\omega)-D\varphi(t,x,\omega)} s= \frac {D\varphi(s,\cdot,\theta_t\omega)\circ D\varphi(t,x,\omega)-D\varphi(t,x,\omega)} s.
   \end{eqnarray*}
   Let $s\rightarrow 0$ yields
   \begin{eqnarray*}
   \frac {dU}  {dt}=A(\theta_t\omega)U.
    \end{eqnarray*}
\end{proof}
Denote $B(\theta_t\omega)x=f(\theta_t\omega,x)-A(\theta_t\omega)x$, then $\varphi$ is the solution of
   \begin{eqnarray}
   && \frac {du}  {dt}=A(\theta_t\omega)u+B(\theta_t\omega)u,\label{4.1}\\
  &&  u(0,x,\omega)=x. \nonumber
  \end{eqnarray}
We  assume the nonlinear term $B(\theta_t\omega)$ satisfies $B(\theta_t\omega)(0) = 0 $, and assume it to be
Lipschitz continuous on $H$, that is,
 \begin{eqnarray*}
|B(\theta_t\omega)u_1 -B(\theta_t\omega)u_2| \leq \operatorname{Lip} B |u_1 - u_2|
 \end{eqnarray*}
with the sufficiently small Lipschitz constant $\operatorname{Lip} B > 0$.
We show the existence of a
center manifold for the random partial differential
equation~\eqref{4.1}.

For each $\eta>0$\,,  we denote the Banach
space
\begin{align*}
C_\eta={}&\left \{ \phi\in C(\mathbb{R}, H) \mid  \sup_{t\in \mathbb{R}}\exp\left[{-\eta |t|}\right] | \phi(t)|
 < \infty\right\}
\end{align*}
with the norm
\[\left|\phi\right|_{C_\eta}=\sup_{t\in \mathbb{R}}\exp\left[{-\eta |t|}\right] |\phi(t)|.
\]
The set~$C_\eta$ is the set of  `slowly varying' functions.
We know that the functions  are controlled by
 $\exp\left[{\eta |t|}\right]$.
Let
\[M^c(\omega)=\left\{u_0\in H \mid u(\cdot, u_0, \omega) \in C_\eta\right\},
\]
where  $u(t, u_0, \omega)$~is  the solution of~(\ref{4.1}) with the
initial data $u (0)= u_0$\,.

  We
prove that $M^c(\omega)$~is invariant and is  the
graph of a Lipschitz function.

Different from the proof process of Duan et al.~\cite{Duan1,Duan2}, we need analysis the behavior of the  solution on center subspace.
\begin{theorem} \label{Thm(3.1)} Suppose $U(t,\omega)$ satisfies the exponential trichotomy.
 If   $\gamma < \eta<\min\{\beta, \alpha\}$ such that the nonlinearity term is sufficiently small,
\begin{eqnarray}K(\omega)  \operatorname{Lip} B  \left(\frac{1}{\eta-\gamma}+\frac{1}{\beta-\eta}+\frac{1}{\alpha-\eta}\right)<
1\,,\label{pr1}\end{eqnarray} then there exists a  center manifold for
 the random  differential  equation~(\ref{4.1}),  which is written as the graph
\[
M^c(\omega) = \{v+h^c(v,\omega)\mid v \in H^c\},
\]
where $h^c(\cdot,  \omega) : H^c\to  H^u\oplus H^s$ is a Lipschitz continuous mapping from the center subspace and
satisfies $h^c(0,\omega)=0$\,. 
\end{theorem}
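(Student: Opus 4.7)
\medskip

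\noindent\textbf{Proof sketch.} My plan is to mimic the discrete argument of Theorem~\ref{Thm(3.2)}, replacing the discrete variation of constants sums by the continuous Lyapunov--Perron integrals adapted to the trichotomy splitting. First I would show that $u_0 \in M^c(\omega)$ if and only if there exists $u(\cdot,u_0,\omega) \in C_\eta$ with $v = P^c u_0$ satisfying
\begin{align*}
u(t) = {}& U^c(t,\omega)v + \int_0^t U^c(t-s,\theta_s\omega) B^c(\theta_s\omega) u(s)\,ds \\
& {} - \int_t^{+\infty} U^u(t-s,\theta_s\omega) B^u(\theta_s\omega) u(s)\,ds \\
& {} + \int_{-\infty}^t U^s(t-s,\theta_s\omega) B^s(\theta_s\omega) u(s)\,ds.
\end{align*}
The ``only if'' direction comes from projecting the mild solution of \eqref{4.1} onto the three subspaces and letting the initial time tend to $\pm\infty$; the exponential trichotomy bounds \eqref{a}--\eqref{c} combined with the $C_\eta$-growth of $u$ make the boundary terms vanish because $\eta < \min\{\beta,\alpha\}$ and $\eta > \gamma$. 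The ``if'' direction is a direct check that any solution of this integral equation solves \eqref{4.1} and lies in $C_\eta$.

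Next I would set up, for each fixed $v \in H^c$, the nonlinear operator $J^c(\cdot,v): C_\eta \to C_\eta$ defined by the right-hand side of the above equation. Using the trichotomy estimates and the fact that $B$ is globally Lipschitz, a computation exactly analogous to \eqref{eq(d3.6)} (with the geometric sums replaced by the elementary integrals $\int_0^\infty e^{(\gamma-\eta)s}ds = \tfrac{1}{\eta-\gamma}$, $\int_0^\infty e^{-(\beta-\eta)s}ds = \tfrac{1}{\beta-\eta}$, $\int_0^\infty e^{-(\alpha-\eta)s}ds = \tfrac{1}{\alpha-\eta}$) shows that
\[
| J^c(u,v) - J^c(\bar u,v) |_{C_\eta} \leq K(\omega)\operatorname{Lip} B \Bigl(\tfrac{1}{\eta-\gamma} + \tfrac{1}{\beta-\eta} + \tfrac{1}{\alpha-\eta}\Bigr) | u - \bar u |_{C_\eta},
\]
and by the smallness hypothesis \eqref{pr1} the contraction constant is strictly less than~$1$. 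The uniform contraction principle then yields a unique fixed point $u(\cdot,v,\omega) \in C_\eta$, Lipschitz in $v$ with constant controlled by $K(\omega)/(1-$ contraction constant$)$. Measurability of $(v,\omega) \mapsto u(\cdot,v,\omega)$ follows, as in the discrete case, by writing $u$ as the $\omega$-wise limit of the contraction iterates starting at~$0$, each of which is $\mathcal{F}$-measurable.

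I would then define
\[
h^c(v,\omega) := P^u u(0,v,\omega) + P^s u(0,v,\omega)
= -\!\!\int_0^{+\infty}\!\!\! U^u(-s,\theta_s\omega) B^u(\theta_s\omega) u(s)\,ds + \!\!\int_{-\infty}^{0}\!\!\! U^s(-s,\theta_s\omega) B^s(\theta_s\omega) u(s)\,ds.
\]
Lipschitz continuity of $h^c(\cdot,\omega)$ is inherited from that of $v \mapsto u(\cdot,v,\omega)$, and $h^c(0,\omega) = 0$ since $u \equiv 0$ is the unique fixed point corresponding to $v = 0$ (using $B(\theta_t\omega)(0) = 0$). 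The graph representation $M^c(\omega) = \{v + h^c(v,\omega) : v \in H^c\}$ follows from the characterization in step one. Finally, for invariance, if $u_0 \in M^c(\omega)$ then $s \mapsto u(s+\cdot, u_0,\omega) = u(\cdot, \varphi(s,u_0,\omega), \theta_s\omega)$ still lies in $C_\eta$, so $\varphi(s,u_0,\omega) \in M^c(\theta_s\omega)$ for all $s \geq 0$.

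The main obstacle is purely bookkeeping: verifying the contraction estimate requires splitting the integrand into the three projections and carefully using each trichotomy bound with the correct sign of $t-s$, which is exactly the continuous analogue of the rather lengthy discrete computation~\eqref{eq(d3.6)}. No cut-off $\sigma$ or auxiliary tempered radius $\rho(\omega)$ is needed here because the hypothesis imposes global smallness of $\operatorname{Lip} B$ directly; consequently the tangency condition $Dh^c(0,\omega) = 0$ is not asserted in this statement and the argument is simpler than its discrete counterpart.
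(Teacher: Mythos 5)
Your proposal is correct and follows essentially the same route as the paper: the Lyapunov--Perron integral characterization of $M^c(\omega)$, the contraction $J^c$ on $C_\eta$ with the same three integral estimates giving the constant in \eqref{pr1}, and the same arguments for measurability, the graph representation, and invariance. The only difference is cosmetic: the paper's proof additionally establishes the tangency condition $Dh^c(0,\omega)=0$, which the theorem statement does not actually assert, so omitting it does not leave a gap.
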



\begin{proof}
 First we claim that $u_0 \in M^c(\omega)$ if and only if  there exists a slowly varying function $u(\cdot, u_0, \omega)\in
C_\eta$  with
\begin{align}\label{eq(3.2)}\begin{split}
u(t, u_0,\omega)={}&U^c(t,v,\omega)+\int_0^t
U^c(t-s,\omega)P^cB(\theta_r\omega)u(r)\,dr
\\&{} +\int^{t}_{-\infty}U^s(t-r,\omega)P^sB(\theta_r\omega)u(r)\,dr \\&{} -\int_{t}^{+\infty}U^u(t-r,\omega)P^uB(\theta_r\omega)u(r)\,dr\,,
\end{split}
\end{align}
where $v = P^c u_0$.  

To prove this claim,  first we let $u_0 \in M^c(\omega)$.
By
using the variation of constants formula, the solution on each subspace denoted as
\begin{align}&\label{eq(3.3)}
P^cu(t, u_0, \omega) = U^c(t,\omega)
v + \int_0^tU^c(t-r,\omega)
P^cB(\theta_r\omega)u(r)\,dr\,.
\\&
\label{eq(3.4)}
P^u u(t,u_0, \omega) =U^u(t-\tau,\omega)P^uu(\tau, u_0, \omega)+\int_{\tau}^t U^u(t-r,\theta_r\omega)
P^uB(\theta_r\omega)u(r)\,dr\,.
\\&\label{eq(3.44)}
P^s u(t, u_0, \omega) =U^s(t-\tau,\omega)P^s u(\tau, u_0, \omega)+\int_{\tau}^t U^s(t-r,\theta_r\omega)
P^s B(\theta_r\omega)u(r)\,dr\,.
\end{align}
Since the slowly varying function $u \in C_\eta$\,, we have for $t < \tau $ that the magnitude
\begin{align*}
\left|U^u(t-\tau,\omega)P^uu(\tau, u_0, \omega)\right|
&\leq K^u(\omega)\exp[{\alpha(t-\tau)} ]\exp(\eta\tau)|u|_{C_{\eta}} \\
& = K^u(\omega)\exp[{\alpha t} ]
\exp[{-(\alpha-\eta)\tau}] |u|_{C_{\eta}}
\\& \to 0 \quad\text{as }\tau \to +\infty.
\end{align*}
For  $t > \tau $\,,
\begin{align*}
\left|U^s(t-\tau,\omega)P^uu(\tau, u_0, \omega)\right|
&\leq K^s(\omega)\exp[{-\beta(t-\tau)} ]\exp(\eta\tau)|u|_{C_{\eta}} \\
& =K^s(\omega)\exp[{-\beta t} ]
\exp[{(\beta+\eta)\tau}] |u|_{C_{\eta}}
\\&\to 0 \quad\text{as }\tau \to -\infty.
\end{align*}
Then, taking the two separate limits $\tau \to \pm  \infty$   in~(\ref{eq(3.4)}) and~\eqref{eq(3.44)} respectively,
\begin{align}&\label{eq(3.5)}
P^uu(t, u_0, \omega)=\int^t_\infty
 U^u(t-r,\theta_r\omega)
P^uB(\theta_r\omega)u(r)\,dr,
\\&\label{eq(3.55)}
P^su(t, u_0, \omega)=\int^t_{-\infty}
U^s(t-r,\theta_r\omega)
P^s B(\theta_r\omega)u(r)\,dr\,.
\end{align}
Combining~(\ref{eq(3.3)}),  (\ref{eq(3.5)}) and~(\ref{eq(3.55)}), we have~(\ref{eq(3.2)}).
The converse  follows from a direct computation.

Next we prove  that for any given $v \in H^c$, the centre subspace, the integral
equation~(\ref{eq(3.2)}) has a unique solution in the slowly varying functions space~$C_{\eta}$.
Let
\begin{align}\label{eq(3.22)}\begin{split}
J^c(u,v):={}&U^u(t,\omega)v+\int_0^t
U^c(t-r,\omega)
P^cB(\theta_r\omega)u(r)\,dr\\&{}
 +\int^{t}_{-\infty}U^s(t-r,\omega)P^sB(\theta_r\omega)u(r)\,dr\\&{} -\int_{t}^{+\infty}U^u(t-r,\omega)P^uB(\theta_r\omega)u(r)\,dr\,.
\end{split}
\end{align}
$ J^c$~is
well-defined from $C_{\eta}\times H^c$ to the slowly varying functions space~$C_{\eta}$.
For each pair of slowly varying functions
$u, \bar u \in C_{\eta}$\,, we have  that for $\gamma < {\eta}<\min\{\beta, \alpha\}$, $K(\omega)=\max\{K^s(\omega),K^c(\omega),K^u(\omega)\}$,
\begin{align}\label{eq(3.6)}
\begin{split}
&| J^c(u,v ) -  J^c (\bar u, v ) |_{C_{\eta}} \\
&\leq\sup_{t\in \mathbb{R}} \Bigg\{\exp\left[
-{\eta} |t| \right] \bigg |\int_0^t
U^c(t-r,\omega)
P^c(B(\theta_r\omega)u-B(\theta_r\omega)\bar u)\,dr
\\& \quad
+\int^{t}_{-\infty}U^s(t-r,\omega)(P^sB(\theta_r\omega)u-P^sB(\theta_r\omega)\bar u)\,dr\\& \quad +\int_{t}^{+\infty}U^u(t-r,\omega)(P^uB(\theta_r\omega)u-P^uB(\theta_r\omega)\bar u)\,dr\bigg|
\Bigg\}\\
&\leq\sup_{t\in \mathbb{R}} \Bigg\{K(\omega) \operatorname{Lip} B |u-\bar
u|_{C_{\eta}}\bigg|\int_0^{t} \exp[{(\gamma-{\eta})|t-r|}]\,dr
\\&  \quad
+\int^t_{-\infty}\exp[{({\eta}-\alpha)(t-r)}]\,dr
+\int_t^{+\infty}\exp[{(\beta-{\eta})(t-r)}]\,dr\bigg |
\Bigg\}\\
&\leq K(\omega) \operatorname{Lip} B \left(\frac{1}{{\eta}-\gamma}+\frac{1}{\alpha-{\eta}}+\frac{1}{\beta-{\eta}}\right) |u-\bar
u|_{C_{\eta}}.
\end{split}
\end{align}

From equation~\eqref{eq(3.6)}, $ J^c$~is Lipschitz continuous in~$v$.
By the theorem's precondition~\eqref{pr1},
$ J^c$~is a uniform contraction with respect to the parameter~$v$.
By the uniform contraction mapping principle,
for each $v \in H^c$,  the mapping $J^c (\cdot , v )$ has a
unique fixed point $u(\cdot,v, \omega) \in C_{\eta}$\,.
Combining equation~\eqref{eq(3.22)} and equation~\eqref{eq(3.6)},
\begin{align}\label{eq(3.7)}
&|u(\cdot ,  v,   \omega) - u (\cdot,  \bar v, \omega
)|_{C_{\eta}}\nonumber \\&{}\quad \leq \frac{K(\omega)}{1-K(\omega)\operatorname{Lip} B \left(\frac{1}{{\eta}-\gamma}+\frac{1}{\beta-{\eta}}+\frac{1}{\alpha-{\eta}}\right)}|v - \bar v |,
\end{align}
for each
 fixed point~$u(\cdot,v, \omega)$.
 Then for each time $t$\,,
$u(t,   \cdot, \omega )$~is Lipschitz from the center subspace~$H^c$ to slowly varying  functions~$C_{\eta}$.
 $u(\cdot,  v, \omega  )\in C_{\eta}$ is a unique solution
of the integral equation~(\ref{eq(3.2)}).
Since $u(\cdot , v,   \omega)$ can be an $\omega$-wise limit of the
iteration of contraction mapping~$J^c$ starting at~$0$ and $J^c$
maps a $\mathcal{F}$-measurable function to a $\mathcal{F}$-measurable function,
$u(\cdot, v,   \omega)$~is $\mathcal{F}$-measurable.
Combining $u(\cdot, v, \omega)$~is  continuous with respect to~$H$, we have $u(\cdot , v,   \omega)$ is measurable with respect to~$(\cdot, v, \omega)$.

Let $h^c (v,\omega) := P^s u (0, v, \omega)\oplus P^u u (0, v, \omega)$.
Then
\begin{align*}
h^c(v, \omega)&=\int^{0}_{-\infty}U^s(-r,\omega)P^sB(\theta_r\omega)u(r,v,\omega)\,dr\\&{}\qquad-\int_{0}^{+\infty}U^u(-r,\omega)P^uB(\theta_r\omega)u(r,v,\omega)\,dr\,.
\end{align*}
We see that~$h^c$ is $\mathcal{F}$-measurable and  $h^c(0,\omega)=0$\,. We prove $Dh^c(0,\omega)=0$\,.
Since
\[
K(\omega) \Lip B\left(\frac{1}{\eta-\gamma}+\frac{1}{\beta-\eta}+\frac{1}{\alpha-\eta}\right)< 1
\]
there exists a small number $\delta>0$ such that $\gamma <
\eta-\eta'<\min\{\beta,\alpha\}$ and for all  $ 0 \leq \eta' \leq 2\delta$\,,
\[
K(\omega) \Lip B \left[\frac{1}{(\eta-\eta')-\gamma}+\frac{1}{\beta-(\eta-\eta')}+\frac{1}{\alpha-(\eta-\eta')}\right]<
1\,.
\]
Thus, $J^s(\cdot,v)$ is a uniform contraction in $C_{\eta-\eta'}(\omega)
\subset  C_\eta(\omega)$ for any $0\leq \eta'\leq 2\delta$\,.
Therefore,
$u(\cdot, v,\omega)\in C_{\eta-\eta'}(\omega)$\,.
For $v_0\in H^c$, we
define two operators: let
\[
\mathcal{S}v_0=U^u(t,\omega)v_0 \,,
\]
and for $u'\in {C_{\eta-\delta}}$ let
\begin{align*}
\mathcal{T} u'&=\int_0^tU^c(t-r,\omega)
P^cDB(\theta_r\omega),u(r,v_0,\omega))u'\, dr \\
&\quad {}+\int_{-\infty}^{t} U^s(t-r,\omega)
P^sDB(\theta_r\omega,u(r,v_0,\omega))u' \,dr\\
&\quad {}-\int^{\infty}_{t} U^u(t-r,\omega)
P^uDB(\theta_r\omega,u(r,v_0,\omega))u'\,dr.
\end{align*}
From the assumption,
$\mathcal{S}$~is a bounded linear operator from center subspace~$H^c$ to slowly varying functions space~$C_{\eta-\delta}$.
Using the same arguments   that
$J^c$~is a contraction, we have that $\mathcal{T}$~is a bounded linear
operator from~${C_{\eta-\delta}}$ to itself and
\[
\|\mathcal{T}\| \leq K(\omega) \Lip B \left(\frac{1}{\eta-\delta-\gamma}+\frac{1}{\beta-(\eta-\delta)}+\frac{1}{\alpha-(\eta-\delta)}\right)<
1\,,
\]
which implies that the operator $\operatorname{id}-\mathcal{T}$ is invertible in~${C_{\eta-\delta}}$.
For $v, v_0\in H^c$, we set
\begin{align*}
I={}&\int_0^t U^c(t-r,\omega)
P^c\Big[B(\theta_r\omega,u(r,v,\omega))-
B(\theta_r\omega,u(r,v_0,\omega))\\
&\qquad{}-DB(\theta_r\omega,u(r,v_0,\omega))
(u(r,v,\omega)-u(r,v_0,\omega))\Big]\,dr\\
&{} +\int^{t}_{-\infty}U^s(t-r,\omega)P^s
\Big[B(\theta_r\omega,u(r,v,\omega))-
B(\theta_r\omega,u(r,v_0,\omega))\\
&\qquad{}-DB(\theta_r\omega,u(r,v_0,\omega))
(u(r,v,\omega)-u(r,v_0,\omega))\Big]\,dr
\\
&{} -\int_{t}^{\infty}U^u(t-r,\omega)P^u
\Big[B(\theta_r\omega,u(r,v,\omega))-
B(\theta_r\omega,u(r,v_0,\omega))\\
&\qquad{}-DB(\theta_r\omega,u(r,v_0,\omega))
(u(r,v,\omega)-u(r,v_0,\omega))\Big]\,dr\,.
\end{align*}

We obtain
\begin{align}\label{eq(4.1)}
\begin{split}
&u(\cdot,v, \omega)-u(\cdot,v_0,
\omega)-\mathcal{T} (u(\cdot,v,
\omega)-u(\cdot,v_0,
\omega))
=\mathcal{S}(v-v_0) + I,\\
\end{split}
\end{align}
which yields
\begin{align*}
u(\cdot,v, \omega)-u(\cdot,v_0,
\omega)=(\operatorname{id}-\mathcal{T})^{-1}\mathcal{S}(v-v_0)+(\operatorname{id}-\mathcal{T})^{-1}I.
\end{align*}
Using the same approach as the discrete case,  $|I|_{{{C_{\eta-\delta}}}}=o(|v-v_0|)$ as $v\to
v_0$, then  $u(\cdot,v, \omega) $ is differentiable in~$v$ and its
derivative satisfies $D u(t, v,\omega)\in L(H^c,
{{C_{\eta-\delta}}})$,  where $L(H^c, {{C_{\eta-\delta}}})$ is the
usual space of bounded linear operators and
\begin{align}\label{eq(4.2)}
\begin{split}
D u(t, v,\omega)& =U^u(t,\omega)v   +\int_0^t U^c(t-r,\omega)
P^cDB(\theta_r\omega,u(r,v,\omega))D u(r,v,\omega)\,dr\\& \quad+\int^{t}_{-\infty} U^s(t-r,\omega)P^sDB(\theta_r\omega,u(r,v,\omega)) Du(r,v,\omega) \,dr
\\& \quad+\int^{\infty}_{t} U^u(t-r,\omega)P^uDB(\theta_r\omega,u(r,v,\omega)) D u(r,v,\omega) \,dr\,.
\end{split}
\end{align}

The tangency condition $Dh^c(0,\omega)=0$ is from  equation~\eqref{eq(4.2)}.
 From the definition of~$h^c(v,
\omega)$ and the claim that $ u_0 \in M^c(\omega)$  if and only if
there exists \begin{align*}u(\cdot, u_0, \omega)~\in~C_{\eta}\end{align*}with $u(0)=u_0$ and satisfies~(\ref{eq(3.2)}) it follows that $ u_0 \in M^c(\omega)$ if and only
if there exists $v \in H^c$ such that $ u_0=v+h^c(v,
\omega)$, therefore,
\begin{equation*}
M^c(\omega) = \{v+ h^c(v, \omega) \mid v \in H^c\}.
\end{equation*}

Next we prove
that for any $x\in H$\,, the function
\begin{equation}\label{eq(3.8)}
\omega\to\inf_{y\in H^c}\left |x-(y+ h^c(y, \omega))\right |
\end{equation}
is measurable.
Let $H'$~be a countable dense subset of the separable space~$H$.
From  the continuity of~$h^c(\cdot,
\omega)$,
\begin{equation}\label{eq(3.9)}
\inf_{y\in H^c}|x-(y+ h^c(y, \omega))|
=\inf_{y\in H'}|x-P^cy-h^c(P^cy, \omega)|.
\end{equation}
The measurability of~(\ref{eq(3.8)}) follows since
$\omega\to h^c(P^cy,\omega)$ is measurable for any $y\in H'$.

Finally, we show that $M^c(\omega)$~is invariant, that is for each
$u_0\in M^c(\omega)$, $u(s, u_0, \omega) \in M^c(\theta_s\omega)$
for all $ s\geq 0$\,.
Since for $s \geq 0$\,,
$u(t+s, u_0,\omega)$ is a solution of
\[
\frac{du}{dt}=A(\theta_t(\theta_s\omega))u+B(\theta_t(\theta_s\omega))u,\quad
u(0)=u(s, u_0, \omega).
\]
Thus $u(t,u(s,u_0,\omega), \theta_s\omega)=u(t+s, u_0,\omega)$ and  $u(t,u(s,u_0,\omega),
\theta_s\omega) \in C_{\eta}$\,.
So we conclude  $u(s, u_0, \omega) \in
M^c(\theta_s\omega)$.

\end{proof}

\begin{corollary} \label{cor(3.2)} Suppose the linear random dynamical systems $U(t,\omega)$ satisfies the  multiplicative ergodic theorem (\textsc{met}) and  $\alpha, \beta, \gamma \in\mathbb{R}$ is not contained in the
Lyapunov spectrums such that $\cdots<\lambda_i<-\beta<-\gamma<\lambda_j<\gamma<\alpha<\cdots<\lambda_2<\lambda_1$.
\begin{eqnarray}K(\omega)  \operatorname{Lip} B  \left(\frac{1}{\eta-\gamma}+\frac{1}{\beta-\eta}+\frac{1}{\alpha-\eta}\right)<
1\,,\label{pr1}\end{eqnarray} then there exists a  center manifold for
 the random  random dynamical system $\varphi(t,x,\omega)$,  which is written as the graph
\[
M^c(\omega) = \{v+h^c(v,\omega)\mid v \in H^c\},
\]
where $h^c(\cdot,  \omega) : H^c\to  H^u\oplus H^s$ is a Lipschitz continuous mapping from the center subspace and
satisfies $h^c(0,\omega)=0$\,. 
exponential trichotomy condition  and $\text{{\rm
Lip}}_u G$ denotes the Lipschitz constant of $G(\cdot, u)$ with
respect to $u$.
\end{corollary}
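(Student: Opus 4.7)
The plan is to obtain the corollary as a direct composition of two results already established earlier in the paper: Theorem~\ref{t3}, which asserts that the multiplicative ergodic theorem (together with the integrability condition and the spectral gap on the Lyapunov exponents) implies an exponential trichotomy with constants $\alpha,\beta,\gamma$ and tempered bounds $K^s(\omega),K^c(\omega),K^u(\omega)$; and Theorem~\ref{Thm(3.1)}, which constructs a Lipschitz center manifold for any continuous-time random dynamical system whose linearization satisfies exponential trichotomy with a sufficiently small nonlinear Lipschitz term.

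Concretely, I would proceed as follows. First, invoke Theorem~\ref{t3} applied to the linear random dynamical system $U(t,\omega)$: since the Lyapunov spectrum coming from the \textsc{met} satisfies the separation $\cdots<\lambda_i<-\beta<-\gamma<\lambda_j<\gamma<\alpha<\cdots<\lambda_1$, we get a $\theta_t$-invariant full-measure set $\tilde\Omega$ and a decomposition $H=E^s(\omega)\oplus E^c(\omega)\oplus E^u(\omega)$ invariant under $U(t,\omega)$, together with tempered random constants $K^s(\omega),K^c(\omega),K^u(\omega)$ providing the three decay/growth estimates required in the definition of exponential trichotomy. Second, use the preceding lemma to rewrite the nonlinear cocycle $\varphi(t,x,\omega)$ in the variation-of-constants form $du/dt=A(\theta_t\omega)u+B(\theta_t\omega)u$ with $A(\omega)=\tfrac{d}{dt}U(t,\cdot,\omega)|_{t=0}$ and $B(\theta_t\omega)=f(\theta_t\omega,\cdot)-A(\theta_t\omega)\cdot$, so that $\varphi$ is precisely the solution of equation~\eqref{4.1} whose linearization is $U(t,\omega)$ and whose nonlinearity $B$ inherits $B(0)=0$ and the Lipschitz constant $\operatorname{Lip} B$ appearing in hypothesis~\eqref{pr1}.

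Third, apply Theorem~\ref{Thm(3.1)} with this data: the exponential trichotomy just established provides the constants and tempered bounds, while hypothesis~\eqref{pr1} is exactly the smallness condition~\eqref{pr1} of that theorem. The conclusion of Theorem~\ref{Thm(3.1)} then yields the graph representation
\[
M^c(\omega)=\{v+h^c(v,\omega)\mid v\in H^c\},
\]
with $h^c(\cdot,\omega):H^c\to H^u\oplus H^s$ Lipschitz, $h^c(0,\omega)=0$, measurable in $\omega$, and invariant under $\varphi$, which is precisely the statement of the corollary.

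The main obstacle, though essentially bookkeeping, is to verify that the constants $\alpha,\beta,\gamma$ produced by Theorem~\ref{t3} (which come from the positions of the Lyapunov exponents $\lambda_+$, $\lambda_-$ and an arbitrary small margin $\varepsilon$) can be taken to coincide with the $\alpha,\beta,\gamma$ in hypothesis~\eqref{pr1}, and that the $K(\omega)=\max\{K^s(\omega),K^c(\omega),K^u(\omega)\}$ appearing in~\eqref{pr1} is the same tempered random variable produced by Theorem~\ref{t3}; one should also note that the integrability condition~\eqref{inte} needed in Theorem~\ref{t3} is implicit in the assumption that $U(t,\omega)$ satisfies the \textsc{met}. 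Once this identification of constants is made, no new analytic work is needed and the corollary follows immediately.
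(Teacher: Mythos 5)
Your proposal is correct and matches the paper's intent: the corollary is stated without a separate proof precisely because it follows by composing Theorem~\ref{t3} (the \textsc{met} with the spectral gap yields exponential trichotomy with tempered constants $K^s,K^c,K^u$) with Theorem~\ref{Thm(3.1)} (trichotomy plus the smallness condition~\eqref{pr1} yields the Lipschitz graph $M^c(\omega)$), after rewriting $\varphi$ via the lemma as the solution of equation~\eqref{4.1}. Your bookkeeping remarks about identifying $\alpha,\beta,\gamma$, $K(\omega)$, and the integrability condition are exactly the points the paper leaves implicit.
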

\section{Applications} \label{Sec4}
\begin{example}\label{exam1}
In this example, we show the  above center manifold theory by illustrating a stochastic evolution equation
\begin{equation}\label{eq(2.1)}
    \frac{d u}{dt}=A u+F( u)+u\circ \dot{W}(t),
\end{equation}
where $u\in H$ is a Hilbert space typically defined on some spatial domain,  ${W}(t)$~is the standard $\mathbb{R}$-valued Wiener process on a probability space~$(\Omega,\mathcal{F}, \mathbb{P} )$,  which is only	dependent on  time. Suppose the spectrum of~$A$ satisfies\[
\mu_{1}>\cdots>\mu_{j}>0>\mu_{j+1}>\mu_{j+2}>\cdots\text{
\ \ (with \ }\mu_{j}\rightarrow-\infty\text{ \ as \ }j\rightarrow
\infty\text{)},%
\] 
and $A$~generates a strong continuous semigroup $S_A(t)$, being $S_A(t)$ compact for all $t\geq 0$.
 We assume the nonlinear term~$F$ satisfies $F(0)=0$\,,  and assume it to be Lipschitz
continuous on~$H$, that is,
\[
|F(u_1)-F(u_2)|\le \Lip  F\,|u_1-u_2 |
\]
with the sufficiently small  Lipschitz constant $\Lip  F>0$\,.

  The above example  has been show that there exist a stochastic center manifold by Chen et al.~\cite{Chen}.
 Now we only verify the equation \eqref{eq(2.1)} satisfies the \textsc{met} in Section \ref{pre}, then  there exists a stochastic center manifold.

Let $C_0(\mathbb{R},\mathbb{R})$ be continuous functions on~$\mathbb{R}$, the associated distribution~$\mathbb{P}$ is a Wiener measure defined on the Borel-$\sigma$-algebra $\mathcal{B}(C_0(\mathbb{R},\mathbb{R}))$.
Define $\{\theta_t\}_{t\in\mathbb{R}}$ to be the metric dynamical system generated by the Wiener process~$W(t)$.

We transform  the stochastic evolution equation~\eqref{eq(2.1)}  into the following partial differential equation with random coefficients
\begin{equation}\label{eq(2.5)}
\frac{du}{dt}=Au+z(\theta_t\omega)u+G(\theta_t\omega,u),\quad
u(0)=u_0\in H,
\end{equation}
where $G(\omega,u)=\exp[{-z(\omega)}]F(\exp[{z(\omega)}]u)$, $z(\theta_t\omega)$ is the solution of Ornstein--Uhlenbeck equation,
\begin{equation}\label{eq(2.4)}
dz+ z\,dt=dW.
\end{equation}
And
\[
|G(\omega,u_1)-G(\omega,u_2)|\le \Lip_uG\,  |u_1-u_2 |,
\]
where $\Lip_u G$ denotes the Lipschitz constant of~$G(\cdot, u)$ with
respect to~$u$.
For  any $\omega\in\Omega$ the function~$G$ has the
same global Lipschitz constant  as~$F$ by the construction of~$G$.
The linearization equation is
\begin{equation}\label{eq(2.5)}
\frac{du}{dt}=Au+z(\theta_t\omega)u,\quad
u(0)=u_0\in H,
\end{equation}
$U(t,\omega)$ is compact since $S_A(t)$ is compact. We prove that the assumption \eqref{inte} is satisfied. For $t \in  [-1,1],$
\begin{align*}
\|U(t,\omega)\| & \leq\|S_A(t)\| |
\exp\int_{0}^{t}z(\theta_{r}\omega)dr|,
 \\
\|U(1-t,\theta_t\omega)\|
&  \leq\|S_A(1-t)\|  |\exp  \int_{0}^{1-t}z(\theta_{t+r}\omega)dr|,
\end{align*}
and%
\begin{align*}
\log^{+}\|U(t,\omega)\| &  \leq\log^{+}\|S_A(t)\| +\int_{-1}^{1}|z(\theta_{r}\omega)|dr,\\
\log^{+} \|U(1-t,\theta_t\omega)\| & \leq \log^{+} \|S_A(1-t)\|+
\int_{-2}^{1}|z(\theta_{r}\omega)|dr,
\end{align*}
Therefore,%
\[
 \mathbb{E}\sup_{0\le t\le 1}\log^+\|U^{\pm}(t,\omega)\|+
    \mathbb{E}\sup_{0\le t\le
    1}\log^+\|U^{\pm}(1-t,\theta_t\omega)\|<\infty.
\]
\end{example}
\begin{example}
  Here we consider the Burgers' equation with a random force which is a space-time white noise
  \begin{eqnarray}\label{eqaa1}
    \frac{\partial u}{\partial t}=\triangle u-u\nabla u+u+\sigma \frac{\partial  \phi(x,t)}{\partial t \partial x},\\
     u(0,t)=u(\pi,t)=0,\nonumber \\
      u(x,0)=u_0.\nonumber
  \end{eqnarray}
  Let $H=L^2(0,\pi)$, and consider a cylindrical Wiener process by setting
  \begin{eqnarray*}
     W(t)=\frac {\partial \phi}{\partial x}=\sum_{k=1}^{\infty}\sigma_ke_kW_k(t), 
  \end{eqnarray*}
  where $e_k$ is an orthonormal basis of  $H$ and $W_k(t)$ is a sequence of mutually independent real Brownian motions in a fixed probability space $(\Omega,\mathcal{F},\mathbb{P})$ adapted to a filtration $\{\mathcal{F}_t\}_{t\geq 0}$, $\sum_{k=1}^{\infty}\sigma_k<\infty$. Then we rewrite the equation \eqref{eqaa1} to
    \begin{eqnarray}\label{eqaa2}
    d u =\triangle u-u\nabla u+u+\sigma  dW,\\
     u(x,0)=u_0.\nonumber
  \end{eqnarray}
 Mohammed et al.~\cite{Moh} have shown that the \textsc{spde} \eqref{eqaa2} generates a $C^1$ perfect cocycle $\varphi(t,x,\omega)$. Linearize the dynamics of the  the \textsc{spde} \eqref{eqaa2},
  \begin{eqnarray}\label{eqaa3}
    d u =\triangle u+u+\sigma  dW,\\
     u(x,0)=u_0.\nonumber
  \end{eqnarray} From the example \ref{exam1}, the  assumption \eqref{inte} is satisfied.    The Lyapunov exponents is $\lambda_k=-(k^2-1)$, $k=1,2,\cdots$. By a cut-off technique, we claim there exists a local center manifold $M^c(\omega)$.
  Now we directly use  computer algebra to compute the stochastic center manifold~\cite{Rob1}.
Define the Ornstein--Uhlenbeck processes $\mathcal{H}_k\phi=\exp[-(k^2-1)t]\star \dot W(t)=\int_{-\infty}^t\exp[-(k^2-1)(t-\tau)]\phi(\tau)d\tau$,
 then
\begin{align}
    u=&
    a\sin x -\frac 16a^2\sin 2x +\frac 1{32}a^3\sin3x
    +\sigma\sum_{k=2}^\infty \mathcal{H}_k \phi_k\sin kx
    \nonumber\\
    &{}+a\sigma\Big[ -\frac 16\mathcal{H}_2\phi_2\sin x
    +(\frac 13\mathcal{H}_2\phi_1 +\mathcal{H}_2\mathcal{H}_3\phi_3)\sin2x
\nonumber \\&{}+\sum_{k=3}^\infty \frac k2\mathcal{H}_k( \mathcal{H}_{k+1}\phi_{k+1}
   -\mathcal{H}_{k-1}\phi_{k-1}) \sin kx \Big]+\mathcal{O}{(a^4+\sigma^2)}\,.
    \label{eq:lincm}
\end{align}
The corresponding model for the evolution,
\begin{equation}
    \dot a =
    -\frac 1{12}a^3 +\sigma\phi_1
    +\frac 16a\sigma\phi_2
    +a^2\sigma(\frac 1{18}\phi_1+\frac 1{96}\phi_3)
    +\mathcal{O}{(a^5+\sigma^2)}\,,
    \label{eq:linmod}
\end{equation}
has no fast-time convolutions.
\end{example}
\paragraph{Acknowledgement} This work was supported by the Australian Research Council   grants DP0774311 and DP0988738, and by the NSF grant 1025422.


\begin{thebibliography}{99}
\bibitem{Arn98}
L.~Arnold.
\newblock {{R}andom {D}ynamical {S}ystems},
\newblock Springer, New York, 1998.

\bibitem{Bar}  L. Barreira and C. Valls,  Smooth center manifolds for nonuniformly partially
hyperbolic trajectories. \emph{J. Differential Equations}, \textbf{237},  307--342, 2007.
\bibitem{Ben}   A.  Bensoussan and  F.  Flandoli, Stochastic inertial manifold. \emph{Stochastics Stochastics Rep.},   \textbf{53},   13 -- 39,  1995.
\bibitem{Blo}  D. Bl\"{o}mker and A. Jentzen, Galerkin approximations for the stochastic Burgers equation. arXiv:1304.3288, 2013.
\bibitem{Box}  P.  Boxler,  A stochastic version of center manifold theory.
\emph{Probab. Theory Related Fields},
\textbf{83}, 509--545, 1989.
\bibitem{Car1}    T.  Caraballo, I. Chueshov, and J. A. Langa,  Existence of invariant manifolds for coupled parabolic and hyperbolic stochastic partial differential equations. \emph{Nonlinearity}, \textbf{18},   747--767,  2005.
\bibitem{Car2}  T. Caraballo, J. Duan, K. Lu and B. Schmalfuss, Invariant manifolds for random and stochastic partial differential equations. \emph{Adv. Nonlinear Stud.}, \textbf{10},  23--52,   2010.

\bibitem{Carr}  J.  Carr,   Applications of Center Manifold Theory, Springer, New York, 1981.


\bibitem{Chen} X. Chen, A. J. Roberts and J. Duan,  Center manifolds for stochastic evolution equations. arXiv:1210.5924, 2012.

\bibitem{Cop} W. A. Coppel, \emph{Dichotomies in stability theory}.  Springer-Verlag, 1978.
\bibitem{Duan1}   J. Duan, K.  Lu   and B. Schmalfu{\ss},  {Smooth stable and unstable manifolds for stochastic evolutionary equations}. \emph{J. Dynam. Differential Equations},   \textbf{ 16},
949--972, 2004 .

\bibitem{Duan2}
J.  Duan,   K. Lu   and B. Schmalfu{\ss},
\newblock Invariant manifolds for stochastic partial
 differential equations.
\emph {Ann. Probab.},   \textbf{ 31},  2109--2135, 2003.

\bibitem{Gal} T. Gallay, A center-stable manifold theorem for differential equations in Banach spaces. \emph{Comm. Math. Phys.}, \textbf{152},    249--268, 1993.

\bibitem{Gy} I. Gy\"{o}ngy, Lattice approximations for stochastic quasi-linear parabolic partial differential equations driven by space-time white noise I, II, \emph{Potential Anal.}, \textbf{9} 1--25, 1998.

\bibitem{Gy1} W. Grecksch and P. E. Kloeden, Time-discretized Garlekin approximation of parabolic \textsc{spde}s', \emph{Bull. Aust. Math. Soc.}, \textbf{54}, 79--85, 1996.
\bibitem{Har}  M. Haragus and G.  Iooss,  Local bifurcations, center manifolds, and normal forms in infinite dimensional dynamical systems, Springer, 2010.

\bibitem{Kel} A. Kelly, The stable, center-stable, center, center-unstable and unstable manifolds. \emph{J. Differential Equations}, \textbf{3}, 546--570, 1967.


\bibitem{Li}  W. Li and  K. Lu,  Sternberg theorems for random dynamical systems.
\emph{Comm. Pure Appl. Math.}, \textbf{58}, 941--988, 2005.

\bibitem{Lia}  Z.  Lian and K.  Lu,  Lyapunov exponents and invariant manifolds for random
dynamical systems in a Banach space. \emph{Mem. Amer. Math. Soc.} \textbf{206}, 2010.
\bibitem{Mar}  J. E. Marsden and M. McCracken, The Hopf bifurcation  and its application, Springer-Verlag, Berlin,
1976.
\bibitem{Moh0} S. -E. A. Mohammed, The Lyapunov spectrum and stable manifolds for
stochastic linear delay equations, \emph{Stochastics and Stochastic
Reports}, \textbf{29}, 89--131,  1990.

\bibitem{MohScheu99}
S.-E.~A. Mohammed and M.~K.~R. Scheutzow,
The stable manifold theorem for stochastic differential equations.
 \emph{Ann. Probab.},  \textbf{27},  615--652, 1999.

\bibitem{Moh}
S. -E. A. Mohammed, T. Zhang  and H. Zhao, The stable manifold theorem for semilinear stochastic evolution equations and stochastic partial differential equations.
\emph{Mem. Amer. Math. Soc.},  \textbf{196},   1--105,  2008.


\bibitem{Pli}  V. Pliss and  G. Sell,  Robustness of exponential dichotomies in infinite dimensional dynamical systems.
\emph{J. Dynam. Differential Equations}, \textbf{11},  471--513, 1999.
\bibitem{Prato}   G.  Da Prato  and A.  Debussche,   Construction of stochastic inertial manifolds using backward integration.  \emph{Stochastics Stochastics Rep.}, \textbf{59},  305--324,  1996.
\bibitem{Prato1}   G. Da Prato and J. Zabczyk, \newblock{Stochastic equations in infinite dimensions, in Encyclopedia of Mathematics and its Application. \newblock{Cambridge University Press}, Cambridge}, 1992.

\bibitem{Rob}     A. J. Roberts,   Low-dimensional modelling of dynamics via computer algebra.
\emph{Comput. Phys. Comm.},   \textbf{ 100},   215--230,    1997.

\bibitem{Rob3}   A. J. Roberts, Subgrid and interelement interactions a
ffect discretisations of stochastically forced diffusion. In Wayne Read,
Jay W. Larson, and A. J. Roberts, editors, Proceedings of the
13th Biennial Computational Techniques and Applications Conference, CTAC-2006, volume 48 of ANZIAM J., pages C168¨CC187.


\bibitem{Rob1}     A. J. Roberts,   Resolving the multitude of microscale interactions accurately models stochastic partial differential equations.
\emph{LMS J.  Comput. Math.},   \textbf{ 9},   193--221,    2006.



\bibitem{Rob2}     A. J. Roberts,   Model dynamics on a multigrid across multiple length and time scales.
\emph{Multiscale model. simul.},   \textbf{ 7},   1525--1548,    2009.



\bibitem{Rue1} D. Ruelle, Ergodic theory of differentiable dynamical
systems,
\emph{Publ. Math.
Inst. Hautes Etud. Sci.}, \textbf{50}, 275--306, 1979.
\bibitem{Rue2} D. Ruelle, Characteristic exponents and invariant
manifolds in
Hilbert space, \emph{Annals of Mathematics}, \textbf{115},  243--290, 1982.




\bibitem{Sac}     R. J. Sacker and G. R. Sell,  Existence of dichotomies and invariant splittings for linear differential systems. III.   \emph{J. Differential Equations},
\textbf{22},  497--522, 1976.
\bibitem{Van}  A. Vanderbauwhede and G. Iooss, Center manifold theory in infinite dimensions.   \emph{Dynam. Report}, \textbf{1},  125--163,  1992.




\bibitem{Yoo} H. Yoo,  Semi-discretization of stochastic
partial differential equations on $\mathbb{R}^1$
by a finite-difference method. \emph{Math. Comp.}, \textbf{69},  653--666, 2000.














\end{thebibliography}
\end{document}